\documentclass[10pt,a4paper]{amsart}
\usepackage[cp1252]{inputenc}
\usepackage[T1]{fontenc}
\usepackage[a4paper]{geometry}
\usepackage{amsmath}
\usepackage{amsfonts}
\usepackage{amssymb}
\theoremstyle{plain}
\newtheorem{thm}{Theorem}
\newtheorem*{thmS}{Dimension free bounds for the Hardy-Littlewood maximal operator}
\newtheorem*{fsi}{Fefferman-Stein inequalities}
\newtheorem{pr}{Proposition}
\newtheorem{lem}{Lemma}
\theoremstyle{remark}

\newtheorem*{rem}{Remark}
\theoremstyle{definition}

\begin{document}

\title{Dimension free bounds for the vector-valued Hardy-Littlewood maximal operator}
\author{Luc Deleaval and Christoph Kriegler}
\date{September, 2016}
 \keywords{Hardy-Littlewood maximal operator; dimension free bounds; vector-valued estimates; UMD Banach lattice; Grushin operator}
\subjclass[2010]{42B25; 43A85; 46B42}
\maketitle

 \begin{abstract}
In this article, we prove Fefferman-Stein inequalities in $L^p(\mathbb R^d;\ell^q)$ with bounds independent of the dimension $d$, for all $1<p,q<+\infty$. This result generalizes in a vector-valued setting the famous one by Stein for the standard Hardy-Littlewood maximal operator. We then extend our result by replacing $\ell^q$ with an arbitrary UMD Banach lattice. Finally, we prove  similar dimensionless inequalities in the setting of the Grushin operators.
\end{abstract}

%
%

\section{Introduction and statement of the results}

At the beginning of the 1980s, Elias Stein proved in \cite{stsquare} (the complete detailed proof is in the paper of Stein-Str\"omberg \cite{2st}) that the standard Hardy-Littlewood maximal operator, that is associated with Euclidean balls, satisfies $L^p(\mathbb R^d)$ estimates with constant independent of the dimension $d$ for every $p>1$. More precisely, if we denote by
$\mathcal M$ the Hardy-Littlewood maximal operator, initially defined for $f \in L^1_{\mathrm{loc}}(\mathbb R^d)$ by
\[
\mathcal Mf(x)=\sup_{r>0}\frac{1}{|B(x,r)|}\int_{B(x,r)}|f(y)|dy, \quad x \in \mathbb R^d,
\]
with $B(x,r)$ the Euclidean ball centered at $x$ of radius $r>0$ and  $|X|$ the Lebesgue measure of a Borel subset $X$ of $\mathbb R^d$, then Stein's result reads as follows.
\begin{thmS}
Let $1<p\leqslant+\infty$. If $f \in L^p(\mathbb R^d)$, then we have
\[
\|\mathcal Mf\|_{L^p(\mathbb R^d)}\leqslant C(p)\|f\|_{L^p(\mathbb R^d)},
\]
where $C(p)$ is a constant independent of $d$.
\end{thmS}

This result, which improves in a spectacular fashion the behavior previously known, has opened the way to the following program: is it possible to bound uniformly in dimension the constant appearing in Hardy-Littlewood type estimates for maximal operators associated with symmetric convex bodies? This topic has been studied by various authors during the period 1986-1990 (see the papers of Bourgain \cite{B1,B2,B4}, Carbery \cite{Car} and M\"uller \cite{Mul}), and has been recently renewed by further advances, especially due to Bourgain \cite{B3}. For a thorough exposition of this subject, we refer the reader to the recent survey \cite{DGM}. In fact, Stein's result has opened the way, beyond the case of maximal functions, of proving fundamental estimates in harmonic analysis in $\mathbb R^d$ with formulations with bounds independent of the dimension.

It is therefore quite surprising that the question of a dimensionless behavior of the constant in the vector-valued extensions of the Hardy-Littlewood maximal theorem, the so-called Fefferman-Stein inequalities \cite{fs}, has not been tackled. This is the main purpose of our paper. Let us first recall these inequalities.
\begin{fsi}
Let $1<p, q <+\infty$ and let $(f_n)_{n\geqslant1}$ be a sequence of measurable functions defined on $\mathbb R^d$. If $\bigl(\sum_{n=1}^{+\infty}|f_n(\cdot)|^q\bigr)^\frac{1}{q} \in {L}^p(\mathbb R^d)$, then we have
\[
\biggl\|\Bigl(\sum_{n=1}^{+\infty}|\mathcal Mf_n(\cdot)|^q\Bigr)^\frac{1}{q}\biggr\|_{{L}^p(\mathbb R^d)}\leqslant C(d,p,q)\biggl\|\Bigl(\sum_{n=1}^{+\infty}|f_n(\cdot)|^q\Bigr)^\frac{1}{q}\biggr\|_{{L}^p(\mathbb R^d)},
\]
where $C(d,p,q)$ is a constant independent of  $(f_n)_{n\geqslant1}$.
\end{fsi}

The proof given by Fefferman and Stein for their inequalities, mainly based on the Calder\'on-Zygmund decomposition (for a weak-type result), the Marcinkiewicz interpolation theorem and a suitable weighted inequality, leads to a constant which growths exponentially with $d$. Another approach, based on Banach-space valued singular integrals \cite{GCRDF} (see also \cite{grafakos}), does not achieve this dimensionless goal either. In this paper, we succeed in proving the following dimensionless result.
\begin{thm} \label{indepdim}
Let $1<p, q <+\infty$ and let $(f_n)_{n\geqslant1}$ be a sequence of measurable functions defined on $\mathbb R^d$. If $\bigl(\sum_{n=1}^{+\infty}|f_n(\cdot)|^q\bigr)^\frac{1}{q} \in {L}^p(\mathbb R^d)$, then we have
\[
\biggl\|\Bigl(\sum_{n=1}^{+\infty}|\mathcal Mf_n(\cdot)|^q\Bigr)^\frac{1}{q}\biggr\|_{{L}^p(\mathbb R^d)}\leqslant C(p,q)\biggl\|\Bigl(\sum_{n=1}^{+\infty}|f_n(\cdot)|^q\Bigr)^\frac{1}{q}\biggr\|_{{L}^p(\mathbb R^d)},
\]
where $C(p,q)$ is a constant independent of $d$ and $(f_n)_{n\geqslant1}$.
\end{thm}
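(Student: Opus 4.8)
The plan is to read the Fefferman-Stein inequality as the assertion that the Hardy-Littlewood maximal operator $\mathcal M$, acting coordinatewise, is bounded on the vector-valued space $L^p(\mathbb R^d;\ell^q)$ with norm independent of $d$; more generally I would work with $L^p(\mathbb R^d;X)$ for an arbitrary UMD Banach lattice $X$, since this is the natural level of generality and $\ell^q$ is such a lattice for $1<q<+\infty$. Two observations frame the argument. First, the diagonal case $q=p$ is immediate from the scalar dimension free theorem quoted above, applied to each $f_n$ and summed, so the whole difficulty lies in decoupling the exponents $p$ and $q$. Second, writing $A_r$ for the normalized average over $B(\cdot,r)$ and $(T_t)_{t>0}=(e^{t\Delta})_{t>0}$ for the heat semigroup, the layer-cake identity $T_t=\int_0^\infty A_r\,d\nu_t(r)$ with $\nu_t$ a probability measure shows $\sup_{t>0}|T_tf|\leqslant \mathcal Mf$ pointwise; the heart of the matter is the reverse, quantitative and dimension free control of $\mathcal M$ by the semigroup.

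The first step is the vector-valued semigroup maximal inequality: I would prove $\|\sup_{t>0}|T_tf|\|_{L^p(\mathbb R^d;X)}\leqslant C(p,X)\|f\|_{L^p(\mathbb R^d;X)}$ with $C$ independent of $d$. Since $(T_t)$ is a positive symmetric diffusion semigroup and $X$ is a UMD lattice, this follows from the maximal theory for such semigroups, whose constant depends only on $p$ and on the UMD and lattice constants of $X$, and not on the semigroup itself, hence not on $d$. Concretely one combines a Hopf-Dunford-Schwartz type maximal ergodic bound for the lattice-valued part with Stein's semigroup maximal principle, both insensitive to the dimension.

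The crux is the second step, the dimension free comparison $\|\mathcal Mf\|_{L^p(X)}\leqslant C(p,X)\|f\|_{L^p(X)}$. Here the elementary route is blocked: comparing the kernel $|B_r|^{-1}\chi_{B_r}$ with a superposition of heat kernels loses a factor of order $\sqrt d$, because near the boundary sphere the uniform density on $B_r$ exceeds any dimension free average of Gaussians matched to that scale, so $\mathcal M\leqslant C\sup_{t>0}T_t$ is false with a $d$-independent constant. In the scalar setting Stein and Str\"omberg recover dimensionlessness through an $L^2$ square-function estimate built from the dimension free decay of the Bessel multiplier $\widehat{\chi_{B_1}}/|B_1|$, but this Plancherel-type argument does not survive the passage to $L^p(\ell^q)$. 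I would therefore replace it by a lattice-valued Littlewood-Paley argument: after matching scales $t\sim r^2$, decompose $\mathcal Mf\leqslant \sup_{r>0}|T_{cr^2}f|+\sup_{r>0}\bigl|A_rf-T_{cr^2}f\bigr|$, bound the first term by the previous step, and control the oscillation term, which vanishes as $r\to 0$ and as $r\to+\infty$, by a square function of the difference $A_rf-T_{cr^2}f$. The decisive input is that this square function estimate be simultaneously dimension free and valid in $X$, which I would derive from the bounded $H^\infty$ functional calculus of $-\Delta$ on $L^p(\mathbb R^d;X)$, available with a $d$-independent bound for UMD lattices by Meyer-Pisier type semigroup arguments, together with the dimension free pointwise and derivative bounds of the Bessel-type symbols governing $A_r-T_{cr^2}$.

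The main obstacle is exactly this second step: producing a comparison that is at once dimension free and compatible with the Banach-lattice structure. One must absorb the $\sqrt d$ loss of the naive kernel comparison uniformly in $d$, and do so with square functions formed in $X=\ell^q$, where orthogonality is unavailable, rather than in $L^2$. I expect the two pivotal estimates to be the $d$-independent $H^\infty$-calculus bound for the Laplacian on $L^p(\mathbb R^d;X)$ and the accompanying dimension free control of the multipliers describing the difference between ball averages and the semigroup. Once these are in place, the two steps combine to give the asserted inequality with $C(p,q)$ independent of $d$, the UMD-lattice formulation yielding the stated $\ell^q$ result as the special case $X=\ell^q$.
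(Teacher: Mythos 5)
Your framing is right in several places: the diagonal case $q=p$ does follow from the scalar theorem, the naive pointwise comparison of the uniform density on $B(x,r)$ with a Gaussian does lose a factor of order $\sqrt d$, and the dimension free lattice-valued maximal inequality for the heat semigroup (your first step) is indeed available --- it is Xu's theorem, which the paper itself uses, but only to dispose of the finitely many low dimensions $d<d'$. The genuine gap is your second step. You reduce everything to a dimension free $L^p(\mathbb R^d;X)$ bound for $\sup_{r>0}|A_rf-T_{cr^2}f|$ via square functions, and you propose to obtain the required square function estimates from the $H^\infty$ calculus of $-\Delta$ on $L^p(\mathbb R^d;X)$ together with ``dimension free pointwise and derivative bounds of the Bessel-type symbols.'' But those symbol bounds, $|m(\xi)|\lesssim|\xi|^{-(d-1)/2}$ and $|\xi\cdot\nabla m(\xi)|\lesssim|\xi|^{-(d-3)/2}$, only convert into operator bounds through Plancherel, i.e.\ in $L^2(\mathbb R^d;\ell^2)$; they are not $H^\infty$ bounds on any sector (the holomorphic extensions of the Bessel symbols are not uniformly controlled in $d$), and for a general UMD lattice $X$ and $p\neq 2$ there is no Plancherel substitute that turns a radial symbol estimate into a dimension free bound for the vertical square function $\bigl(\int_0^{+\infty}|\sigma(rD)f|^2\,dr/r\bigr)^{1/2}$ of the non-semigroup family $A_r$. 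So the decisive estimate of your argument is exactly the open difficulty restated, not resolved.

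There is also a structural reason to doubt that this route can close without an additional idea: any estimate extracted from the ball or sphere averages in dimension $d$ (square function or maximal) is naturally limited to exponents in a $d$-dependent range such as $\frac{d}{d-1}<p,q<d$, since the sharp $L^2(\ell^2)$ gain $2^{-l(d-2)/2}$ must be interpolated against a crude $L^{p_0}(\ell^{q_0})$ bound that loses $2^l$. What the paper does --- and what your proposal omits entirely --- is the method of descent: it proves the vector-valued spherical maximal estimate in a fixed dimension $d'=d'(p,q)$, lifts it to $\mathbb R^d$ through the weighted operators $\mathcal M^\theta_{d'}$ and an average over the orthogonal group (Lemmas \ref{l1} and \ref{l2}), and thereby decouples the admissible range of $(p,q)$ from $d$. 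Without some such mechanism, your argument could at best give the conclusion for $p,q$ in a range that shrinks or moves with $d$, not the full claim for all $1<p,q<+\infty$ with a constant $C(p,q)$. I would recommend either supplying a genuinely dimension free lattice-valued square function estimate for $A_r-T_{cr^2}$ (which I do not believe is currently known) or adopting the rotation/descent step, at which point you essentially recover the paper's proof.
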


We believe that dimension free vector-valued estimates for general symmetric convex bodies $B$ should be true as well, but certainly not in full generality for both $p$ and $B$. Sharp vector-valued estimates on maximal operators associated with (radial) Fourier multipliers might be a key step, among others, to obtain such dimension free bounds.

Let $\mathcal{S}(\mathbb R^d)$ be the Schwartz class of smooth functions $\phi$ such that $(1+|x|^k)\phi^{(l)}(x)$ is bounded on $\mathbb R^d$ for all integers $k,l\geqslant0$. As in the proof of the dimensionless result by Stein for the Hardy-Littlewood maximal operator, the main tool in our proof will be the following spherical maximal operator $\mathcal{M}_S$, initially defined for $f \in \mathcal S(\mathbb R^d)$ by
\[
\mathcal{M}_Sf(x)=\sup_{r>0}\biggl|\int_{S^{d-1}}f(x-ry)d\sigma(y)\biggr|, \quad x \in \mathbb R^d,
\]
where  $d\sigma$ denotes the normalized Haar measure on $S^{d-1}$, and for which we will prove in particular the following vector-valued estimates.

\begin{thm} \label{sph}
Let $d\geqslant3$ and let $\frac{d}{d-1}<p,q<d.$
Let $(f_n)_{n\geqslant1}$ be a sequence of measurable functions defined on $\mathbb R^d$. If $\bigl(\sum_{n=1}^{+\infty}|f_n(\cdot)|^q\bigr)^\frac{1}{q} \in L^p(\mathbb R^d)$, then we have
\[
\biggl\|\Bigl(\sum_{n=1}^{+\infty}|\mathcal{M}_Sf_n(\cdot)|^q\Bigr)^\frac{1}{q}\biggr\|_{L^p(\mathbb R^d)}\leqslant C(d,p,q)\biggl\|\Bigl(\sum_{n=1}^{+\infty}|f_n(\cdot)|^q\Bigr)^\frac{1}{q}\biggr\|_{L^p(\mathbb R^d)},
\]
where $C(d,p,q)$ is a constant independent of $(f_n)_{n\geqslant1}$.
\end{thm}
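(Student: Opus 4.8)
The plan is to reproduce, in the $L^p(\mathbb R^d;\ell^q)$ setting, Stein's original treatment of $\mathcal M_S$ through an analytic family of generalized spherical means, taking advantage of the observation that on the diagonal $p=q$ a vector-valued bound costs nothing, while Stein's analytic interpolation is precisely what manufactures the off-diagonal exponents and, as it turns out, the range $\frac{d}{d-1}<p,q<d$. Concretely, I would introduce the family $d\mu^z$ with density proportional to $\frac{1}{\Gamma(z)}(1-|y|^2)_+^{z-1}$, analytic in $z\in\mathbb C$, whose analytic continuation at $z=0$ is a constant multiple of the surface measure $d\sigma$, and set $\mathcal M^z_t f=f*(d\mu^z)_t$ and $\mathcal M^z_*f=\sup_{t>0}|\mathcal M^z_t f|$, so that $\mathcal M^0_*=\mathcal M_S$. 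Since $\mathcal M^z_*$ is only sublinear, the first technical task is to linearize it, choosing for each component $f_n$ a measurable radius $t_n(\cdot)$ and dualizing the $\ell^q$-norm against a fixed $\ell^{q'}$-valued function, so as to obtain a genuine analytic family of linear operators to which the interpolation theorem for analytic families applies.

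Second, I would record the critical-line estimate. Because the symbol of $(d\mu^z)_1$ decays like $|\xi|^{-(d-1)/2-\operatorname{Re} z}$, on the line $\operatorname{Re} z=\frac{2-d}{2}$ it decays like $|\xi|^{-1/2}$, and Stein's $g$-function argument (Sobolev embedding in the dilation parameter $t$ followed by Plancherel) yields the scalar bound $\|\mathcal M^z_* f\|_{L^2}\lesssim \|f\|_{L^2}$ for $\operatorname{Re} z>\frac{2-d}{2}$, with at most admissible growth in $\operatorname{Im} z$; this is exactly where $d\geqslant 3$ is used, since it places $z=0$ strictly inside the region of $L^2$-boundedness. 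The key point is that the $L^2(\ell^2)$ estimate is then automatic: since the inner and outer exponents coincide, $\|(\sum_n|\mathcal M^z_* f_n|^2)^{1/2}\|_{L^2}^2=\sum_n\|\mathcal M^z_* f_n\|_{L^2}^2\lesssim\sum_n\|f_n\|_{L^2}^2$.

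Third, at the smooth end I would take $\operatorname{Re} z=1$, where $(1-|y|^2)_+^{z-1}$ is bounded on the unit ball, so that $(d\mu^z)_t$ is dominated by a normalized, radially decreasing integrable kernel and hence $\mathcal M^z_*f\lesssim \mathcal Mf$ pointwise. Applying the \emph{classical} (dimension-dependent) Fefferman-Stein inequality to $\mathcal M$, which is all that is needed here since $d$ is fixed and $C(d,p,q)$ may depend on $d$, gives the full off-diagonal family of bounds $\mathcal M^z_*\colon L^{p_0}(\mathbb R^d;\ell^{q_0})\to L^{p_0}(\mathbb R^d;\ell^{q_0})$ for all $1<p_0,q_0<\infty$, again with admissible growth in $\operatorname{Im} z$.

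Finally, I would interpolate. Viewing $L^p(\mathbb R^d;\ell^q)$ as a complex interpolation scale and applying Stein's interpolation theorem for analytic families between the critical line $\operatorname{Re} z=\frac{2-d}{2}$ (endpoint $L^2(\ell^2)$) and the line $\operatorname{Re} z=1$ (endpoint $L^{p_0}(\ell^{q_0})$), the value $z=0$ corresponds to $\theta=\frac 2d$ through the relations
\[
\frac1p=\frac{1-\theta}{p_0}+\frac{\theta}{2},\qquad \frac1q=\frac{1-\theta}{q_0}+\frac{\theta}{2},\qquad \theta=\frac2d .
\]
Letting $(p_0,q_0)$ run over $(1,\infty)^2$, and absorbing an $\varepsilon$-loss by placing the critical endpoint on $\operatorname{Re} z=\frac{2-d}{2}+\varepsilon$ and sending $\varepsilon\to0$, makes $1/p$ and $1/q$ sweep out the open interval $(\tfrac1d,\tfrac{d-1}{d})$, which is exactly $\frac{d}{d-1}<p,q<d$. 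The main obstacle I anticipate is not the interpolation bookkeeping but the two preparatory points: carrying out the linearization of the vector-valued maximal operator so that analyticity in $z$ is preserved, and establishing the critical-line $L^2$ estimate with growth in $\operatorname{Im} z$ slow enough (of exponential type $<\pi$, as produced by the $1/\Gamma(z)$ normalization together with the $g$-function bounds) to be admissible for Stein's theorem.
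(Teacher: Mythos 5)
Your proposal is correct in outline, but it deliberately takes the route the paper says it avoids: the authors state that they do \emph{not} follow Stein's analytic-family argument for the scalar spherical maximal theorem but rather Rubio de Francia's. The paper decomposes the multiplier $m=\widehat{d\sigma}$ into dyadic radial pieces $m_l=\varphi_l m$ supported where $|\xi|\simeq 2^l$, so that $\mathcal{M}_S\leqslant\sum_{l\geqslant 0}M_{m_l}$; for each fixed $l$ it proves a sharp $L^2(\mathbb R^d;\ell^2)$ bound $\lesssim 2^{-l(d-2)/2}$ (via the square function $g_{m_l}$ and the Bessel decay of $m_l$ and $\langle x,\nabla m_l(x)\rangle$) and a crude $L^{p_0}(\mathbb R^d;\ell^{q_0})$ bound $\lesssim 2^l$ (via the Funk--Hecke formula giving $|m_l^\vee(x)|\lesssim 2^l(1+|x|)^{-(d+1)}$ and the classical Fefferman--Stein inequality); it then applies ordinary complex interpolation to the \emph{linear} operators $A_{m_l}:(f_n)_n\mapsto (r\mapsto f_n*(m_l^\vee)_r)_n$ mapping into $L^p(\mathbb R^d;\ell^q(L^\infty))$ and sums a geometric series, the condition $\frac{d}{d-1}<p,q<d$ being exactly the condition for summability of $2^{l(1-d\eta/2)}$. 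Your version instead runs a single Stein analytic-family interpolation on the generalized spherical means $d\mu^z$, with the same two inputs (a critical-line $L^2(\ell^2)$ bound, trivial because inner and outer exponents coincide, and the dimension-dependent Fefferman--Stein inequality at $\operatorname{Re}z=1$), and the same arithmetic $\theta=2/d$ produces the same range. Both are sound. What the paper's route buys is that the supremum is absorbed into the target space $\ell^q(L^\infty)$, so no linearization of the vector-valued maximal operator and no admissible-growth bookkeeping are needed; more importantly, the pieces $M_{m_l}$ with their two endpoint estimates are reused verbatim in Section 5, where the interpolation is performed in the lattice parameter ($Y=[Z,H]_\eta$) rather than in $(p,q)$, something the analytic-family argument does not hand you. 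What your route buys is economy for Theorem 2 taken in isolation. The two technical debts you correctly flag --- linearizing $\sup_{t>0}$ componentwise with measurable radii $t_n(\cdot)$ while preserving analyticity and uniformity of the endpoint bounds, and controlling the growth in $\operatorname{Im}z$ coming from $1/\Gamma(z)$ and the Bessel asymptotics --- are standard but must actually be discharged for your proof to be complete.
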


We point out that vector-valued estimates for $\mathcal{M}_S$ have been recently proved by Manna in \cite{Manna}, for the range $\frac{2d}{d-1}<p,q<+\infty$, by use of a convenient weighted inequality for $\mathcal M_S$. We believe that the range $\frac{d}{d-1}<p,q<+\infty$ is optimal for $d\geqslant 3$, and, also in the case $d=2$, this might be true, as in the scalar case, see \cite{BouSph}.

In fact, we shall prove a more general result than Theorem 1. In order to state it, let us recall that a Banach space $Y$ is called UMD space if the Hilbert transform
\[ H : L^p(\mathbb R) \to L^p(\mathbb R),\: Hf(x) = PV - \int_{\mathbb R} \frac{1}{x-y} f(y) dy, \]
extends to a bounded operator on $L^p(\mathbb R;Y),$ for some $1 < p < +\infty$ (see Section 5 for more details).
Here and in what follows, we denote $L^p(\mathbb R^d; Y)$ the Bochner-Lebesgue space, i.e. the space of (equivalence classes of) measurable functions $f : \mathbb R^d \to Y$ such that
\[ \|f\|_{L^p(\mathbb R^d;Y)}^p = \int_{\mathbb R^d} \| f(x) \|_Y^p dx < +\infty .\]
With the above reminder in mind, we can now state our second main result, which is concerned with the UMD lattice valued Hardy-Littlewood maximal operator $\mathcal M$ (see Section 5 for the precise definition).

\begin{thm}\label{thm-independent-dimension-UMD-lattice}
Let $1 < p < + \infty$ and $Y = Y(\Omega,\mu)$ be a UMD Banach lattice, consisting without loss of generality of measurable functions over $(\Omega,\mu).$
We have with notations $x \in \mathbb R^d$ and $y \in \Omega,$
\[ \biggl\| \: \Bigl\|\mathcal M\bigl(f(\cdot,y)\bigr)(x)\Bigr\|_{Y} \: \biggr\|_{L^p(\mathbb R^d)} \leq C(p,Y) \|f\|_{ L^p(\mathbb R^d;Y)}, \]
where $C(p,Y)$ is a constant independent of $d$ and  $f \in  L^p(\mathbb R^d;Y).$
\end{thm}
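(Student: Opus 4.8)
The plan is to derive Theorem~\ref{thm-independent-dimension-UMD-lattice} from the dimension free $\ell^q$-valued estimate of Theorem~\ref{indepdim} by a transfer that takes place entirely on the lattice side, so that the final constant $C(p,Y)$ retains the $d$-independence of the constant $C(p,q)$ furnished by Theorem~\ref{indepdim}. First I would give a meaning to the quantity to be bounded. Writing $A_r g(x) = |B(x,r)|^{-1}\int_{B(x,r)} g(z)\,dz$ for the ball averages, one has $\mathcal M\bigl(f(\cdot,y)\bigr)(x) = \sup_{r>0} A_r|f|(x,y)$. Since a UMD space is reflexive, $Y$ is order continuous and order complete, so for each fixed $x$ the lattice supremum of the family $\{A_r|f|(x,\cdot)\}_{r>0}$ exists in $Y$ and defines a positive sublinear operator $\mathcal M_Y\colon f \mapsto \bigl(x\mapsto \sup_{r>0} A_r|f|(x,\cdot)\bigr)$. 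By density it suffices to prove the a priori inequality for $f$ simple with values in a finite-dimensional sublattice of $Y$, and by order continuity the supremum may be taken over $r\in\mathbb Q_{>0}$; this exhibits $\mathcal M_Y$ as a countable lattice supremum of the positive linear operators $A_r\otimes\mathrm{id}_Y$.

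The core of the argument is then the passage from the scale of spaces $\ell^q$ to an arbitrary UMD lattice. Here I would rely on the structure theory of UMD Banach lattices in the spirit of Rubio de Francia and Bourgain: such a lattice has nontrivial convexity and concavity and may be reconstructed, through the Calder\'on product construction, from lattices on which the Fefferman-Stein bound is already available, the paradigm being the spaces $\ell^q$ appearing in Theorem~\ref{indepdim}. Using the lattice form of Rubio de Francia's extrapolation philosophy (see \cite{GCRDF}), the $\ell^q$-valued inequality, valid for \emph{every} $1<q<+\infty$ with a constant independent of $d$, is to be upgraded to a $Y$-valued inequality for the averaging family $\{A_r\}$, with a constant governed only by $p$, by the $\ell^q$-constants of Theorem~\ref{indepdim}, and by the UMD constant of $Y$. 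Since none of these quantities involves the ambient dimension, the resulting bound on $\mathcal M_Y$ is dimension free, which is exactly the assertion of Theorem~\ref{thm-independent-dimension-UMD-lattice}.

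The delicate point, and the main obstacle, is that $\mathcal M_Y$ is sublinear rather than linear, so the transfer cannot be applied to it as a single operator. I would handle this by linearization: for each measurable selection $x\mapsto r(x)$ of radii the map $f\mapsto \bigl(x\mapsto A_{r(x)}|f|(x,\cdot)\bigr)$ is a genuine positive linear operator, the desired maximal estimate being equivalent to the boundedness of this family uniformly in the selection. Positivity is decisive: by Calder\'on's interpolation theorem for positive operators on Calder\'on products the endpoint $\ell^q$-bounds propagate to the intermediate lattice, and the return to the supremum is recovered from the order continuity of $Y$ by monotone convergence. The one thing to watch throughout is that every constant produced in this chain depends solely on $p$ and on the lattice $Y$ and never reintroduces the dimension $d$; this is guaranteed because the only place where the analysis on $\mathbb R^d$ enters is through Theorem~\ref{indepdim}, whose $d$-free character is thereby transmitted unchanged to $C(p,Y)$.
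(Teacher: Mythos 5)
There is a genuine gap at the heart of your transfer argument. You propose to pass from the $\ell^q$-valued inequality of Theorem~\ref{indepdim} to an arbitrary UMD lattice $Y$ on the grounds that $Y$ ``may be reconstructed, through the Calder\'on product construction, from lattices on which the Fefferman-Stein bound is already available, the paradigm being the spaces $\ell^q$.'' This is not true: the Calder\'on products $(\ell^{q_0})^{1-\theta}(\ell^{q_1})^{\theta}$ are again spaces $\ell^q$, so the class of lattices reachable from the $\ell^q$ scale in this way is just the $\ell^q$ scale itself, and a general UMD lattice (say $L^{q_1}(L^{q_2})$, or something more exotic over $(\Omega,\mu)$) is not of this form. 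The structure theorem that actually holds, and that the paper uses (Rubio de Francia, Corollary p.~216 of \cite{RdF1986}), is $Y = [Z,H]_\eta$ where $H = L^2(\Omega,\mu)$ is a Hilbert space but $Z$ is merely \emph{another UMD lattice} over the same measure space --- not an $\ell^q$ space --- so one of the two interpolation endpoints is a space on which Theorem~\ref{indepdim} gives you nothing. Your fallback, ``the lattice form of Rubio de Francia's extrapolation philosophy,'' does not repair this: extrapolation into UMD-lattice-valued $L^p$ spaces runs through weighted $A_p$ inequalities for $\mathcal M$, and no dimension-free weighted bounds are proved in the paper or known; the standard ones carry dimension-dependent constants, which would destroy exactly the uniformity in $d$ you are trying to preserve. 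Your linearization via measurable selections of radii and Calder\'on's interpolation theorem for positive operators is fine as far as it goes, but it has no correct pair of endpoints to interpolate between.

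For comparison, the paper's proof does not transfer Theorem~\ref{indepdim} at all. It goes back to the dyadic decomposition $\mathcal M_S \leqslant \sum_l M_{m_l}$ of the spherical maximal operator and, for each piece $A_{m_l}$, interpolates between a Hilbertian $L^2(\mathbb R^d;H(L^\infty))$ estimate with geometric decay $2^{-l(d-2)/2}$ (Proposition~\ref{propl2}, valid because $H = L^2(\Omega)$) and a crude $L^{p_0}(\mathbb R^d;Z(L^\infty))$ estimate of size $2^l$ obtained from the pointwise bound \eqref{schw} together with Xu's theorem \cite{Xu2015} that $\mathcal M$ is bounded on $L^{p_0}(\mathbb R^d;Z)$ for \emph{any} UMD lattice $Z$ (with a $d$-dependent constant, which is harmless at this fixed auxiliary dimension). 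Summing over $l$ gives the $Y$-valued spherical maximal estimate for $d$ large, the method of descent of Lemmas~\ref{l1} and~\ref{l2} then produces the dimension-free bound for $\mathcal M$ for $d \geqslant d_0$, and low dimensions are again covered by Xu's theorem. If you want to salvage your write-up, you would need to replace the $\ell^q$-to-$Y$ transfer by this interpolation against the representation $Y = [Z,H]_\eta$, and you would also need to say something about small $d$, which your proposal does not address.
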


The above theorem contains as a particular case Theorem 1 since $\ell ^q$ is a UMD Banach lattice for $1 < q < + \infty$, but we have made the decision, for the reader's convenience, to first prove Theorem 1 which is certainly an enlightening step for readers not familiar with UMD Banach lattices.

As a consequence of our two previous theorems, we shall prove, in the setting of Grushin operators,  vector-valued dimension free estimates for both the maximal operator $\mathcal{M}_{CC}$ associated with the Carnot-Carath\'eodory distance and the maximal operator $\mathcal{M}_K$ associated with the Kor\'anyi pseudo-distance (see the final section for more details).

\begin{thm}
\label{thm-Grushin}
Let $1 < p,q < + \infty$.
Then $\mathcal{M}_{CC}$ and $\mathcal{M}_K$ extend to bounded operators on $L^p(\mathbb R^{d+1};\ell^q)$ and there exists a constant $C = C(p,q)$ independent of $d$ such that
\begin{align*}
\biggl\|\Bigl(\sum_{n=1}^{+\infty}|\mathcal{M}_{CC}f_n(\cdot)|^q\Bigr)^\frac{1}{q}\biggr\|_{{L}^p(\mathbb R^{d+1})} &\leqslant C(p,q)\biggl\|\Bigl(\sum_{n=1}^{+\infty}|f_n(\cdot)|^q\Bigr)^\frac{1}{q}\biggr\|_{{L}^p(\mathbb R^{d+1})}\\
\intertext{and}
\biggl\|\Bigl(\sum_{n=1}^{+\infty}|\mathcal{M}_Kf_n(\cdot)|^q\Bigr)^\frac{1}{q}\biggr\|_{{L}^p(\mathbb R^{d+1})} &\leqslant C(p,q)\biggl\|\Bigl(\sum_{n=1}^{+\infty}|f_n(\cdot)|^q\Bigr)^\frac{1}{q}\biggr\|_{{L}^p(\mathbb R^{d+1})}.
\end{align*}
In the same manner, if $Y$ is a UMD Banach lattice, then $\mathcal{M}_{CC}$ and $\mathcal{M}_K$ extend to bounded operators on $L^p(\mathbb R^{d+1};Y)$ with norm $C = C(p,Y)$ independent of $d.$
\end{thm}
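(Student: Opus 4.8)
The plan is to reduce Theorem~\ref{thm-Grushin} to the Euclidean results already established, namely Theorem~\ref{indepdim} (for the $\ell^q$ case) and Theorem~\ref{thm-independent-dimension-UMD-lattice} (for the general UMD lattice case), by exploiting the structure of the Grushin geometry. The essential feature we shall use is that the Grushin maximal operators $\mathcal{M}_{CC}$ and $\mathcal{M}_K$, although defined through balls adapted to a non-Euclidean (sub-Riemannian) distance on $\mathbb{R}^{d+1}$, can be controlled pointwise by averaging operators whose underlying balls are comparable, up to dimension-free constants, to Euclidean balls in the appropriate coordinates. The first step is therefore to recall the explicit form of the Carnot-Carath\'eodory distance and the Kor\'anyi pseudo-distance, and to record the two-sided comparison between the associated balls and standard Euclidean balls (or products of such), paying close attention to the fact that all comparison constants are \emph{independent of $d$}.

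Concretely, I would first establish a pointwise domination of the form $\mathcal{M}_{CC}f(x) \leq C\, \mathcal{M}_{\mathrm{HL}} f(x)$ and $\mathcal{M}_K f(x) \leq C\, \mathcal{M}_{\mathrm{HL}} f(x)$, where $\mathcal{M}_{\mathrm{HL}}$ denotes (a suitable version of) the Euclidean Hardy-Littlewood maximal operator on $\mathbb{R}^{d+1}$ and $C$ is dimension free. This step rests on the ball comparison from the previous paragraph: if every Grushin ball is sandwiched between two Euclidean balls of comparable radius, then the normalized average of $|f|$ over a Grushin ball is bounded by a constant times the average over the enclosing Euclidean ball, hence by $\mathcal{M}_{\mathrm{HL}}f(x)$. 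Once this pointwise bound is in hand, the vector-valued estimates follow immediately: for the $\ell^q$ case, one applies the domination coordinatewise, uses the monotonicity of the $\ell^q$-norm, and then invokes Theorem~\ref{indepdim} to bound $\bigl\|(\sum_n |\mathcal{M}_{\mathrm{HL}} f_n|^q)^{1/q}\bigr\|_{L^p}$ by $C(p,q)\bigl\|(\sum_n |f_n|^q)^{1/q}\bigr\|_{L^p}$ with a constant independent of $d$. The UMD lattice case is identical in structure, replacing the $\ell^q$-norm by the lattice norm and invoking Theorem~\ref{thm-independent-dimension-UMD-lattice} in place of Theorem~\ref{indepdim}; here the pointwise domination must be understood in the lattice order, which is legitimate since $\mathcal{M}$ acts as a positive operator.

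The main obstacle I anticipate is precisely the \emph{dimension-free} comparison between Grushin balls and Euclidean balls. A naive comparison typically produces constants depending on the doubling dimension of the Grushin structure, which grows with $d$; to obtain a genuinely dimension-free bound one must be careful about how the anisotropic dilations interact with the two families of variables (the ``Euclidean'' variables and the degenerate direction). The remedy is to split the maximal operator according to whether the relevant scale is small or large relative to the position, and on each regime to compare the Grushin average with a Euclidean average over a ball in the correct variables only; the key point is that the normalizing volumes cancel in the quotient, so the surviving constant is geometric and independent of $d$. Verifying this cancellation carefully, and checking that the sub-Riemannian volume of a CC-ball is comparable to the product of the volumes in each variable block with a $d$-independent constant, is the technical heart of the argument. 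Once that is secured, the reduction to Theorems~\ref{indepdim} and~\ref{thm-independent-dimension-UMD-lattice} is routine.
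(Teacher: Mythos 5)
Your overall strategy --- dominate the Grushin maximal operators pointwise by Euclidean objects and then invoke Theorems \ref{indepdim} and \ref{thm-independent-dimension-UMD-lattice} --- is the right one, and it is indeed what the paper does. But the specific domination you propose is false, and this is a genuine gap rather than a technicality. You claim that every Grushin ball is ``sandwiched between two Euclidean balls of comparable radius'' in $\mathbb R^{d+1}$, so that $\mathcal{M}_K f \leqslant C\, \mathcal{M}_{\mathrm{HL}} f$ with $\mathcal{M}_{\mathrm{HL}}$ the Hardy--Littlewood maximal operator on $\mathbb R^{d+1}$. This fails even for fixed $d$: the Kor\'anyi ball $B_K((x,u),r)$ is essentially the anisotropic box $\{|x'-x| \lesssim r\} \times \{|u'-u| \lesssim r(r+|x|)\}$, of volume $\approx r^{d+1}(r+|x|)$, whereas the smallest enclosing Euclidean ball has volume $\approx \max\bigl(r, r(r+|x|)\bigr)^{d+1}$. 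Already at the origin with $r$ small, the ratio of these volumes is $\approx r$, so the normalized Grushin average exceeds the Euclidean one by an unbounded factor as $r \to 0$; no dimension-free (or even dimension-dependent) constant can rescue the comparison. You sense this in your third paragraph and propose splitting into regimes and comparing with ``a Euclidean average over a ball in the correct variables only,'' which is indeed the correct picture --- but that is precisely the nontrivial content of the proof, and you leave it as an unverified sketch. The paper's actual key input is Li's pointwise inequality (7.2): $\mathcal{M}_K f(x,u) \leqslant C\, \mathcal M_{\mathbb R^d}\bigl(\mathcal M_{\mathbb R} f(\cdot,u)\bigr)(x)$ with $C$ independent of $d$, i.e.\ domination by the \emph{composition} of the HL maximal operator in the $x$-block with the one-dimensional one in $u$; the vector-valued dimension-free bounds are then applied to each factor separately (via Fubini, or via Theorem \ref{thm-independent-dimension-UMD-lattice} applied to the iterated lattice).

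A second missing piece is the treatment of $\mathcal{M}_{CC}$. You handle it symmetrically with $\mathcal{M}_K$, but the Carnot--Carath\'eodory balls have no explicit description, so a direct comparison with Euclidean (or product) balls is not available. The paper instead reduces $\mathcal{M}_{CC}$ to $\mathcal{M}_K$: from $d_K \leqslant d_{CC}$ one gets $B_{CC}(g,r) \subseteq B_K(g,r)$, and from the $d$-independent volume comparison $|B_{CC}(g,1)| \geqslant C\,|B_K(g,1)|$ together with the dilation homogeneity $|B((x,u),r)| = r^{d+2}|B(\delta_{r^{-1}}(x,u),1)|$ one concludes $\mathcal{M}_{CC} f \leqslant C^{-1} \mathcal{M}_K f$ pointwise. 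Both of these inputs (Li's Propositions 5.1 and 5.2, with constants uniform in $d$) would need to be stated and cited in your argument; without them the claim that all comparison constants are dimension-free is an assertion, not a proof.
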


We end this introduction with an overview of the sections.
In Section \ref{sec-preliminary}, we prove two preliminary results on vector-valued maximal operators associated with multipliers and a Hilbertian square function estimate that we need in the sequel.
The method of proof of the main Theorems \ref{indepdim} and \ref{thm-independent-dimension-UMD-lattice} uses vector-valued estimates for the spherical maximal operator.
The latter is then studied in Section \ref{sec-spherical}, and both a sharp Hilbertian $L^2(\mathbb R^d;\ell^2)$ estimate and a weaker $L^p(\mathbb R^d;\ell ^q)$ estimate are built together by means of complex interpolation to yield the desired spherical maximal operator estimates, stated in Theorem \ref{sph}.
The next two Sections \ref{sec-proof-thm-1} and \ref{sec-proof-thm-2} are devoted to the proofs of Theorems \ref{indepdim} and \ref{thm-independent-dimension-UMD-lattice} respectively.
They use a technique of descent in the spirit of the Calder\'on-Zygmund method of rotations, and the spherical maximal operator estimates established beforehand.
Low dimensional estimates in the general case of UMD-lattice valued $L^p$ spaces are covered by the recent work of Xu \cite{Xu2015}.
Finally, in Section \ref{sec-Grushin}, we will prove Theorem \ref{thm-Grushin}.

\section{Preliminary results}
\label{sec-preliminary}
For $\omega \in \mathcal S(\mathbb R^d)$ a radial function,  we shall denote by ${M}_\omega$  the maximal operator associated with the multiplier $\omega$ and initially defined for $f \in \mathcal S(\mathbb R^d)$ by
\[M_\omega f(x)=\sup_{r>0}\bigl|\bigl(\hat{f}\mathopen(\cdot\mathclose)\omega(r\,\cdot)\bigr)^\vee(x)\bigr|=\sup_{r>0}\bigl|\bigl(f*(\omega^\vee)_r\bigr)(x)\bigr|, \quad x \in \mathbb R^d,
\]
where ${}^\vee$ is the inverse Fourier transform and where, for suitable $\psi$, $\psi_r$ is the dilation of $\psi$, that is to say
\[\psi_r(x)=\frac{1}{r^{d}}\psi\Bigl(\frac{x}{r}\Bigr), \quad x \in \mathbb R^d.\]

The following proposition provides us Fefferman-Stein inequalities for maximal operators associated with such a multiplier $\omega$.
\begin{pr} \label{map} Let $1<p,q<+\infty$ and let $(f_n)_{n\geqslant1}$ be a sequence of measurable functions defined on $\mathbb R^d$. If $\bigl(\sum_{n=1}^{+\infty}|f_n(\cdot)|^q\bigr)^\frac{1}{q} \in L^p(\mathbb R^d)$, then we have
\[
\biggl\|\Bigl(\sum_{n=1}^{+\infty}|M_\omega f_n(\cdot)|^q\Bigr)^\frac{1}{q}\biggr\|_{L^p(\mathbb R^d)}\leqslant C(d,p,q)\biggl\|\Bigl(\sum_{n=1}^{+\infty}|f_n(\cdot)|^q\Bigr)^\frac{1}{q}\biggr\|_{L^p(\mathbb R^d)},
\]
where $C(d,p,q)$ is a constant independent of $(f_n)_{n\geqslant1}$.
\end{pr}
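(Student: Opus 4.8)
The plan is to reduce the statement to the vector-valued Fefferman-Stein inequalities for the Hardy-Littlewood maximal operator $\mathcal M$, already recalled above, by means of a pointwise domination of $M_\omega$ by $\mathcal M$.

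First I would observe that, since $\omega$ is a radial Schwartz function and the Fourier transform maps $\mathcal S(\mathbb R^d)$ onto itself while preserving radiality, the kernel $\psi := \omega^\vee$ is again a radial Schwartz function. In particular $|\psi(x)| \leq C_N(1+|x|)^{-N}$ for every integer $N$. Choosing $N>d$, the least radially decreasing majorant $\Psi(x) := \sup_{|z|\geq|x|}|\psi(z)|$ is integrable on $\mathbb R^d$, and I set $A_d := \|\Psi\|_{L^1(\mathbb R^d)}$, a finite constant depending on $d$ (and on $\omega$). Since $\Psi$ is radial, the dilations satisfy $|\psi_r|\leq \Psi_r$ pointwise, with $\|\Psi_r\|_{L^1(\mathbb R^d)} = A_d$ for every $r>0$.

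The key step is then the classical pointwise domination: for $f\in\mathcal S(\mathbb R^d)$ and every $x\in\mathbb R^d$,
\[ M_\omega f(x) = \sup_{r>0}\bigl|(f*\psi_r)(x)\bigr| \leq \sup_{r>0}\bigl(|f|*\Psi_r\bigr)(x) \leq A_d\,\mathcal M f(x). \]
The last inequality is the standard fact that convolution with an integrable, radially decreasing kernel is controlled by the Hardy-Littlewood maximal function uniformly in the dilation parameter; it follows from the layer-cake decomposition $\Psi = \int_0^{+\infty}\chi_{\{\Psi>\lambda\}}\,d\lambda$, in which each superlevel set $\{\Psi>\lambda\}$ is a centered ball, together with the elementary bound $\bigl(|f|*\chi_{B(0,s)}\bigr)(x)\leq |B(0,s)|\,\mathcal M f(x)$.

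Applying this bound to each $f_n$ and using the monotonicity of the $\ell^q$-norm, I obtain the pointwise estimate
\[ \Bigl(\sum_{n=1}^{+\infty}|M_\omega f_n(x)|^q\Bigr)^{\frac1q} \leq A_d\,\Bigl(\sum_{n=1}^{+\infty}|\mathcal M f_n(x)|^q\Bigr)^{\frac1q}. \]
Taking the $L^p(\mathbb R^d)$ norm of both sides and invoking the Fefferman-Stein inequalities recalled in the introduction yields
\[ \biggl\|\Bigl(\sum_{n=1}^{+\infty}|M_\omega f_n|^q\Bigr)^{\frac1q}\biggr\|_{L^p(\mathbb R^d)} \leq A_d\,C(d,p,q)\,\biggl\|\Bigl(\sum_{n=1}^{+\infty}|f_n|^q\Bigr)^{\frac1q}\biggr\|_{L^p(\mathbb R^d)}, \]
which is the claim with constant $A_d\,C(d,p,q)$, still depending only on $d$, $p$ and $q$. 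The only point requiring genuine attention is the pointwise domination lemma, namely that the dilated radial majorant $\Psi_r$ is controlled by $\mathcal M$ uniformly in $r$; everything else is a routine application of monotonicity and of the cited inequality, and a standard density argument extends the estimate from $\mathcal S(\mathbb R^d)$ to all sequences with $\bigl(\sum_n|f_n|^q\bigr)^{1/q}\in L^p(\mathbb R^d)$.
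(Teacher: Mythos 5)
Your proof is correct and follows essentially the same route as the paper: dominate $M_\omega$ pointwise by $\|\Psi\|_{L^1}\,\mathcal M$ using the integrable radially decreasing majorant of the Schwartz kernel $\omega^\vee$ (the paper simply cites Corollary 2.1.12 of Grafakos for this step, which you re-derive via the layer-cake argument), and then invoke the standard Fefferman--Stein inequalities. No discrepancies to report.
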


\begin{proof} We claim that $\omega$ has an integrable radially decreasing majorant $\Omega$ since $\omega^\vee$ is a Schwartz radial function. Therefore, we have (see Corollary 2.1.12. page 84 in \cite{grafakos}) for every $x \in \mathbb R^d$
\[
M_\omega f(x)=\sup_{r>0}\bigl|\bigl(f*(\omega^\vee)_r\bigr)(x)\bigr|\leqslant \|\Omega\|_{L^1(\mathbb R^d)} \mathcal M f(x).
\]
Thus, all we have to do to conclude is to use the standard version of the vector-valued estimates for the Hardy-Littlewood maximal operator.
\end{proof}

We point out that the proof above applies to the following weak-type result: if $1<q<+\infty$ and if  $\bigl(\sum_{n=1}^{+\infty}|f_n(\cdot)|^q\bigr)^\frac{1}{q} \in L^1(\mathbb R^d)$, then for every $\lambda>0$ we have
\[
\biggl|\biggl\{x \in \mathbb R^d: \Bigl(\sum_{n=1}^{+\infty}|M_{\omega}f_n(x)|^q\Bigr)^\frac{1}{q}>\lambda\biggr\} \biggr|\leqslant \frac{C(d,q)}{\lambda}\biggl\|\Bigl(\sum_{n=1}^{+\infty}|f_n(\cdot)|^q\Bigr)^\frac{1}{q}\biggr\|_{L^1(\mathbb R^d)},
\]
where $C(d,q)$ is a constant independent of $(f_n)_{n\geqslant1}$ and $\lambda$.

We now introduce a square function that is closely related to the previous maximal multiplier operator. For a (radial) function $\omega \in \mathcal S(\mathbb R^d)$, we denote by $g_\omega$ the square function associated with the multiplier $\omega$ and initially defined for $f \in \mathcal S(\mathbb R^d)$ by
\[g_\omega(f)(x)=\biggl(\int_0^{+\infty}\bigl|\bigl(\hat{f}(x)\omega(tx)\bigr)^\vee\bigr|^2\frac{dt}{t}\biggr)^\frac{1}{2}, \quad x \in \mathbb R^d.\]

If the multiplier $\omega$ is supported in an annulus, then we can give the following precise upper bound, where a Hilbertian structure is required.

\begin{pr} \label{squaref}  Let $r$ be a positive real number. Suppose that $\omega$ is supported in the annulus $\{x \in \mathbb R^d: r\leqslant |x|\leqslant \rho r\}$ (with $\rho>1)$ and is bounded by C.  Let $(f_n)_{n\geqslant1}$ be a sequence of measurable functions defined on $\mathbb R^d$. If $\bigl(\sum_{n=1}^{+\infty}|f_n(\cdot)|^2\bigr)^\frac{1}{2} \in L^2(\mathbb R^d)$, then we have
\[ \biggl\|\Bigl(\sum_{n=1}^{+\infty}|g_\omega(f_n)(\cdot)|^2\Bigr)^\frac{1}{2}\biggr\|_{L^2(\mathbb R^d)}\leqslant C\sqrt{\ln(\rho)}\,\biggl\|\Bigl(\sum_{n=1}^{+\infty}|f_n(\cdot)|^2\Bigr)^\frac{1}{2}\biggr\|_{L^2(\mathbb R^d)},\]
where $C$ is the same constant in both the hypothesis and conclusion of the proposition.
\end{pr}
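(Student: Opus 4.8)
The plan is to reduce the whole estimate to a single application of Plancherel's theorem, exploiting the fact that both the ambient space $L^2(\mathbb R^d)$ and the fiber $\ell^2$ carry Hilbertian norms, so that everything decouples over the index $n$. Since $L^2(\mathbb R^d;\ell^2)$ is isometric to $\ell^2\bigl(L^2(\mathbb R^d)\bigr)$, one has
\[ \biggl\|\Bigl(\sum_{n=1}^{+\infty}|g_\omega(f_n)(\cdot)|^2\Bigr)^\frac{1}{2}\biggr\|_{L^2(\mathbb R^d)}^2 = \sum_{n=1}^{+\infty}\|g_\omega(f_n)\|_{L^2(\mathbb R^d)}^2, \]
and symmetrically for the right-hand side. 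It therefore suffices to establish the scalar bound $\|g_\omega(f)\|_{L^2(\mathbb R^d)}\leqslant C\sqrt{\ln(\rho)}\,\|f\|_{L^2(\mathbb R^d)}$ for a single $f\in\mathcal S(\mathbb R^d)$ and then sum over $n$.

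To prove the scalar bound, I would expand the definition of $g_\omega$, invoke Tonelli's theorem to interchange the $x$-integral and the $t$-integral (all integrands being nonnegative), and apply Plancherel's theorem in the $x$-variable for each fixed $t$:
\[ \|g_\omega(f)\|_{L^2(\mathbb R^d)}^2 = \int_0^{+\infty}\!\int_{\mathbb R^d}\bigl|\bigl(\hat f(\cdot)\omega(t\,\cdot)\bigr)^\vee(x)\bigr|^2\,dx\,\frac{dt}{t} = \int_0^{+\infty}\!\int_{\mathbb R^d}|\hat f(\xi)|^2\,|\omega(t\xi)|^2\,d\xi\,\frac{dt}{t}. \]
A further use of Tonelli's theorem lets me integrate in $t$ first, reducing the matter to the elementary computation of $\int_0^{+\infty}|\omega(t\xi)|^2\,\frac{dt}{t}$ for fixed $\xi$.

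The crux, such as it is, lies in this last integral, where the annulus support of $\omega$ enters decisively. For fixed $\xi\neq0$, the condition $r\leqslant|t\xi|\leqslant\rho r$ confines $t$ to the interval $[\,r/|\xi|,\,\rho r/|\xi|\,]$, whose logarithmic length is exactly $\ln(\rho)$ independently of $\xi$; combining this with the pointwise bound $|\omega|\leqslant C$ gives
\[ \int_0^{+\infty}|\omega(t\xi)|^2\,\frac{dt}{t}\leqslant C^2\int_{r/|\xi|}^{\rho r/|\xi|}\frac{dt}{t}=C^2\ln(\rho). \]
Inserting this into the previous display and invoking Plancherel once more yields $\|g_\omega(f)\|_{L^2(\mathbb R^d)}^2\leqslant C^2\ln(\rho)\,\|f\|_{L^2(\mathbb R^d)}^2$, which is the desired scalar estimate; summing over $n$ completes the argument. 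I do not expect any serious obstacle: the proof is a clean Plancherel computation, and the only points deserving a word of justification are the applicability of Tonelli's theorem (guaranteed by nonnegativity of the integrands) and the scale invariance of the logarithmic measure $\frac{dt}{t}$, which is precisely what makes the bound uniform in $\xi$ and produces the sharp factor $\sqrt{\ln(\rho)}$.
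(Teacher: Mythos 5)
Your proof is correct and follows essentially the same route as the paper: both arguments reduce the vector-valued estimate to the scalar one by Hilbertian decoupling over $n$, apply Fubini--Tonelli and Plancherel, and then exploit the annulus support of $\omega$ to bound $\int_0^{+\infty}|\omega(t\xi)|^2\,\frac{dt}{t}$ by $C^2\ln(\rho)$ uniformly in $\xi$. There is nothing to add.
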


\begin{proof} The proof is nearly obvious. Indeed, by using successively Fubini's theorem, Plancherel's theorem and Fubini's  theorem again, we have
\[
\int_{\mathbb R^d}\sum_{n=1}^{+\infty}|g_\omega(f_n)(x)|^2dx=\sum_{n=1}^{+\infty}\int_{\mathbb R^d}\biggl(\int_0^{+\infty}\bigl|\hat{f_n}(x)\omega(tx)\bigr|^2\frac{dt}{t}\biggr)dx.\]
For all $x \in \mathbb R^d\setminus\{0\}$ and all $n\geqslant 1$, we can write
\[
\int_0^{+\infty}\bigl|\hat{f_n}(x)\omega(tx)\bigr|^2\frac{dt}{t}=|\hat{f_n}(x)|^2\int_{\frac{r}{|x|}}^{\frac{\rho r}{|x|}}|\omega(tx)|^2\frac{dt}{t}\leqslant C^2\ln(\rho)|\hat{f_n}(x)|^2,
\]
thus
\[\int_{\mathbb R^d}\sum_{n=1}^{+\infty}|g_\omega(f_n)(x)|^2dx\leqslant C^2\ln(\rho)\sum_{n=1}^{+\infty}\int_{\mathbb R^d}|\hat{f_n}(x)|^2dx.\]
To conclude, it is now enough to use Plancherel's theorem and Fubini's theorem.
\end{proof}

\section{Vector-valued inequalites for the spherical maximal operator}
\label{sec-spherical}

In this section, we prove the  vector-valued inequalities for the spherical maximal operator, stated in Theorem \ref{sph}.  These estimates will be a key tool in the proof of our dimensionless results. Let us begin with the following remark.

\begin{rem}
The condition $\frac{d}{d-1}<p$ can be easily seen to be necessary. Indeed, it suffices to consider the following sequence
\[
f_1(x)=\begin{cases} \mathrm{e}^\frac{-1}{1-|x|^2} \mathrm{\ \, if \ \,} |x|<1 \\ 0 \mathrm{\ \, if \ \,} |x|\geqslant1,
\end{cases} \quad f_2=f_3=\ldots=0.
\]
Of course, $\bigl(\sum_{n=1}^{+\infty}|f_n(\cdot)|^q\bigr)^\frac{1}{q} \in L^p(\mathbb R^d)$ while
\[
\Bigl(\sum_{n=1}^{+\infty}|\mathcal{M}_Sf_n(\cdot)|^q\Bigr)^\frac{1}{q} \notin L^p(\mathbb R^d)
\]
for $p\leqslant\frac{d}{d-1}$ since for $|x|$ large enough,
\[
\Bigl(\sum_{n=1}^{+\infty}|\mathcal{M}_Sf_n(x)|^q\Bigr)^\frac{1}{q}\geqslant \frac{C(d)}{|x|^{d-1}}.
\]
However, the condition $p<d$ is not optimal. Indeed, in a recent paper, Manna has proved by means of a convenient weighted inequality that Theorem \ref{sph} is true for the range $\frac{2d}{d-1}<p,q<+\infty$.
\end{rem}

In order to prove Theorem \ref{sph}, we do not follow Stein's ideas for the scalar case, but rather those of Rubio de Francia in \cite{rubio}. More precisely, we shall dominate $\mathcal{M}_S$ by a series of  maximal multiplier operators $\mathcal{M}_S\leqslant\sum_{l=0}^{+\infty}M_{m_l}$ (where $m_l$ is a radial multiplier), and we shall establish, for each $M_{m_l}$, a sharp $L^2(\mathbb R^d;\ell^2)$ estimate and a weaker $L^p(\mathbb R^d;\ell^q)$ estimate. Then, we shall proceed by complex interpolation, and the range of $p,q,$ in Theorem \ref{sph} is then relevant for series convergence. For the $L^2(\mathbb R^d;\ell^2)$ case, we mainly use both the decay at infinity and a support property for $m_l$, and a precise upper bound for the $L^2(\mathbb R^d)$-norm of an associated square function. For the  $L^p(\mathbb R^d;\ell^q)$ case, we mainly use the standard Fefferman-Stein inequalities and the Funk-Hecke formula. For the reader's convenience, we shall give the complete detailed proofs, which owe a lot to \cite{DGM,grafakos,rubio}.

To begin, we note that the spherical maximal operator could be expressed as follows
\[
\mathcal{M}_Sf(x)=\sup_{r>0}\bigl|\bigl(\hat{f}\mathopen(\cdot\mathclose)m(r\,\cdot)\bigr)^\vee(x)\bigr|=\sup_{r>0}\bigl|\bigl(f*(m^\vee)_r\bigr)(x)\bigr|,\]
where the multiplier $m$ is given by
\[m(x)=\widehat{d\sigma}(x)=\frac{2\pi}{|x|^{\frac{d-2}{2}}}J_{\frac{d-2}{2}}(2\pi |x|),
\]
with $J_\alpha$ the Bessel function of order $\alpha$. In order to decompose this multiplier into radial pieces with localized frequencies, we consider  a smooth radial function $\varphi_0$ on $\mathbb R^d$ satisfying
\[
\varphi_{0}(x)=\begin{cases} 1 \mathrm{\ \, if \ \,} |x|\leqslant 1 \\ 0 \mathrm{\ \, if \ \,} |x|\geqslant2.
\end{cases}
\]
Then, for every positive integer $l$, we define
\[
\varphi_l(x)=\varphi_0(2^{-l}x)-\varphi_0(2^{1-l}x),\]
and we therefore introduce the following dyadic radial pieces associated with the multiplier $m$
\[
\forall l\geqslant0, \quad m_l=\varphi_lm.
\]
Since it is  obvious that $\sum_{l=0}^{+\infty}\varphi_l=1$, we claim that
\[m=\sum_{l=0}^{+\infty}m_l.\]
Consequently, we have the following pointwise inequality
\begin{equation} \label{ptw}
\mathcal{M}_Sf(x)\leqslant \sum_{l=0}^{+\infty}M_{m_l}f(x), \quad x\in \mathbb R^d,
\end{equation}
where $M_{m_l}$ is defined at the beginning of Section 2 by specializing $\omega$ to $m_l$.

\subsection*{$L^2(\mathbb R^d;\ell^2)$ and $L^p(\mathbb R^d;\ell^q)$ estimates  for $M_{m_l}$}

As claimed before, we shall establish an $L^2(\mathbb R^d;\ell^2)$ estimate and an $L^p(\mathbb R^d;\ell^q)$ estimate  for $M_{m_l}$, wtih $l\geqslant1$. As we shall see in the proof of Theorem \ref{sph}, the case $M_{m_0}$ will be covered by Proposition \ref{map}.

Let us begin with the $L^2(\mathbb R^d;\ell^2)$-result for $M_{m_l}$.

\begin{pr} \label{propl2} Let  $l\geqslant 1$ and let $(f_n)_{n\geqslant1}$ be a sequence of measurable functions defined on $\mathbb R^d$. If $\bigl(\sum_{n=1}^{+\infty}|f_n(\cdot)|^2\bigr)^\frac{1}{2} \in L^2(\mathbb R^d)$, then we have
\[\biggl\|\Bigl(\sum_{n=1}^{+\infty}|M_{m_l}f_n(\cdot)|^2\Bigr)^\frac{1}{2}\biggr\|_{L^2(\mathbb R^d)}\leqslant \frac{C(d)}{2^{\frac{l(d-2)}{2}}}\biggl\|\Bigl(\sum_{n=1}^{+\infty}|f_n(\cdot)|^2\Bigr)^\frac{1}{2}\biggr\|_{L^2(\mathbb R^d)},\]
where $C(d)$ is a constant independent of $l$ and $(f_n)_{n\geqslant1}$.
\end{pr}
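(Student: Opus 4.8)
The plan is to dominate the maximal function $M_{m_l}$ pointwise by a product of two square functions of the type controlled by Proposition \ref{squaref}, and then to exploit the decay of the Bessel multiplier $m$ and of its radial derivative on the annulus carrying $m_l$. Writing $A_r f = f*(m_l^\vee)_r$ so that $M_{m_l}f = \sup_{r>0}|A_r f|$, I first note that since $m_l = \varphi_l m$ is supported in $\{2^{l-1}\le|x|\le 2^{l+1}\}$, for each fixed $\xi\neq0$ the symbol $m_l(r\xi)$ vanishes for $r$ near $0$ and near $+\infty$; hence for $f\in\mathcal S(\mathbb R^d)$ the quantity $|A_r f(x)|^2$ tends to $0$ as $r\to0^+$ and as $r\to+\infty$. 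The fundamental theorem of calculus then gives
\[ M_{m_l}f(x)^2 \le \int_0^{+\infty}\Bigl|\partial_r\bigl(|A_r f(x)|^2\bigr)\Bigr|\,dr \le 2\int_0^{+\infty}|A_r f(x)|\,|r\partial_r A_r f(x)|\,\frac{dr}{r}. \]
Applying Cauchy--Schwarz in $r$ against $dr/r$ yields $M_{m_l}f(x)^2 \le 2\,g_{m_l}(f)(x)\,g_{\widetilde m_l}(f)(x)$, where $\widetilde m_l(x) = x\cdot\nabla m_l(x)$ is the multiplier produced by the scaling derivative, because $r\partial_r A_r f = \bigl(\hat f(\cdot)\,\widetilde m_l(r\,\cdot)\bigr)^\vee$.

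With this pointwise inequality in hand, the next step is pure summation and integration. By Cauchy--Schwarz first in $n$ and then in $x$,
\[ \Bigl\|\bigl(\textstyle\sum_n |M_{m_l}f_n|^2\bigr)^{1/2}\Bigr\|_{L^2}^2 \le 2\,\Bigl\|\bigl(\textstyle\sum_n g_{m_l}(f_n)^2\bigr)^{1/2}\Bigr\|_{L^2}\,\Bigl\|\bigl(\textstyle\sum_n g_{\widetilde m_l}(f_n)^2\bigr)^{1/2}\Bigr\|_{L^2}. \]
Both $m_l$ and $\widetilde m_l$ are supported in the annulus $\{2^{l-1}\le|x|\le 2^{l+1}\}$, that is $\rho=4$ in the notation of Proposition \ref{squaref}, so the factor $\sqrt{\ln\rho}$ is an absolute constant; it remains only to supply the sup-norms of the two symbols.

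The heart of the matter, and the step I expect to be the most delicate, is the estimation of these two sup-norms through Bessel asymptotics. Using the standard bound $|J_\nu(z)|\le C_\nu z^{-1/2}$ for $z\ge1$ with $\nu=\frac{d-2}{2}$, the identity $m(x)=2\pi|x|^{-\nu}J_\nu(2\pi|x|)$ gives $|m(x)|\lesssim|x|^{-(d-1)/2}$, whence $\|m_l\|_\infty\lesssim 2^{-l(d-1)/2}$. For the scaling derivative I would use the Bessel relation $\frac{d}{ds}\bigl(s^{-\nu}J_\nu(2\pi s)\bigr)=-2\pi\,s^{-\nu}J_{\nu+1}(2\pi s)$, which shows that differentiating in $|x|$ effectively multiplies the amplitude by the phase frequency $\sim|x|$; combined with the uniform bound $|x\cdot\nabla\varphi_l|\lesssim1$ this yields $\|\widetilde m_l\|_\infty\lesssim 2^l\cdot 2^{-l(d-1)/2}=2^{l(3-d)/2}$. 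The essential point is that the radial (logarithmic) derivative costs exactly one power of $2^l$. Feeding the two bounds into Proposition \ref{squaref} and multiplying,
\[ 2^{-l(d-1)/2}\cdot 2^{l(3-d)/2} = 2^{l(2-d)}, \]
so taking the square root of the product of the two square-function norms returns the claimed gain $2^{-l(d-2)/2}$. The only thing to keep track of is that the constants from the Bessel estimates depend on $\nu$, hence on $d$, which is harmless since $C(d)$ is permitted in the statement.
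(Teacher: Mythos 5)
Your proposal is correct and follows essentially the same route as the paper: the fundamental theorem of calculus plus Cauchy--Schwarz to dominate $M_{m_l}f^2$ by $2\,g_{m_l}(f)\,g_{\tilde m_l}(f)$, then Proposition \ref{squaref} with $\rho=4$ together with the sup-norm bounds $\|m_l\|_\infty\lesssim 2^{-l(d-1)/2}$ and $\|\tilde m_l\|_\infty\lesssim 2^{-l(d-3)/2}$ from Bessel asymptotics. The only cosmetic differences are your frequency-support justification of the vanishing of $A_rf$ as $r\to0^+$ (the paper invokes the approximate-identity differentiation theorem and $m_l(0)=0$) and your choice of Bessel recurrence for the radial derivative.
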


\begin{proof}
Let $n\geqslant1$ and $l \geqslant 1$. By applying the well-known differentiation theorem for multiples of approximate identities (see Corollary 2.1.19. page 88 in \cite{grafakos}), we get for almost all $x \in \mathbb R^d$
\[
\bigl(f_n*(m_l^\vee)_r\bigr)(x)\to m_l(0)f_n(x)=0
\]
as $r$ goes to $0$.  We can therefore write for almost $x \in \mathbb R^d$

\begin{align*}
\bigl(f_n*(m_l^\vee)_r\bigr)^2(x)&=\int_0^r\frac{d}{dt}\Bigl(\bigl(f_n*(m_l^\vee)_t\bigr)^2(x)\Bigr)dt\\ &=2\int_0^r\bigl(f_n*(m_l^\vee)_t\bigr)(x)\bigl(f_n*(\tilde{m}_l^\vee)_t\bigr)(x)\frac{dt}{t},
\end{align*}
where we have set
\[\tilde{m}_l(x)=\langle x,\nabla m_l(x)\rangle.
\]
We now enlarge the domain of the integral to obtain
\[\bigl|\bigl(f_n*(m_l^\vee)_r\bigr)(x)\bigr|^2\leqslant 2\int_0^{+\infty}\bigl|\bigl(f_n*(m_l^\vee)_t\bigr)(x)\bigr|\bigl|\bigl(f_n*(\tilde{m}_l^\vee)_t\bigr)(x)\bigr|\frac{dt}{t},\]
and this previous inequality can be reformulated as follows
\[\bigl|\bigl(\hat{f_n}\mathopen(\cdot\mathclose)m_l(r\,\cdot)\bigr)^\vee(x)\bigr|^2\leqslant 2\int_0^{+\infty}\bigl|\bigl(\hat{f_n}\mathopen(\cdot\mathclose)m_l(t\,\cdot)\bigr)^\vee(x)\bigr|\bigl|\bigl(\hat{f_n}\mathopen(\cdot\mathclose)\tilde{m}_l(t\,\cdot)\bigr)^\vee(x)\bigr|\frac{dt}{t}.\]
We first take the supremum over all $r>0$ and then use the Cauchy-Schwarz inequality in order to get
\[\bigl(M_{m_l}f_n(x)\bigr)^2\leqslant 2\,g_{m_l}(f_n)(x)g_{\tilde{m}_l}(f_n)(x).\]
By summing  over $n$ and by using again the Cauchy-Schwarz inequality, we are led to
\[
\sum_{n=1}^{+\infty}|M_{m_l}f_n(x)|^2\leqslant 2\Bigl(\sum_{n=1}^{+\infty}|g_{{m}_l}(f_n)(x)|^2\Bigr)^\frac{1}{2}\Bigl(\sum_{n=1}^{+\infty}|g_{\tilde{m}_l}(f_n)(x)|^2\Bigr)^\frac{1}{2}.
\]
We now integrate over $\mathbb R^d$ and we use the Cauchy-Schwarz inequality to deduce that
\begin{multline*}
\biggl\|\Bigl(\sum_{n=1}^{+\infty}|M_{m_l}f_n(\cdot)|^2\Bigr)^\frac{1}{2}\biggr\|^2_{L^2(\mathbb R^d)}\leqslant \\ 2\biggl\|\Bigl(\sum_{n=1}^{+\infty}|g_{{m}_l}(f_n)(\cdot)|^2\Bigr)^\frac{1}{2}\biggr\|_{L^2(\mathbb R^d)}\biggl\|\Bigl(\sum_{n=1}^{+\infty}|g_{\tilde{m}_l}(f_n)(\cdot)|^2\Bigr)^\frac{1}{2}\biggr\|_{L^2(\mathbb R^d)}.
\end{multline*}
Let us note the following immediate inclusions
\begin{align*}
\mathrm{supp }({m}_l)&\subset\{x \in \mathbb R^d: 2^{l-1}\leqslant |x|\leqslant 2^{l+1}\} \\ \mathrm{supp }(\tilde{m}_l)&\subset\{x \in \mathbb R^d: 2^{l-1}\leqslant |x|\leqslant 2^{l+1}\}.
\end{align*}
Therefore, thanks to Proposition \ref{squaref}, it is now enough to prove the following inequalities
\begin{equation} \label{bess2}
\|{m}_l\|_{L^\infty(\mathbb R^d)}\leqslant \frac{C_1(d)}{2^{{\frac{l(d-1)}{2}}}}, \quad \|\tilde{m}_l\|_{L^\infty(\mathbb R^d)}\leqslant \frac{{C_2(d)}}{2^{{\frac{l(d-3)}{2}}}},
\end{equation}
where both $C_1(d)$ and ${C_2(d)}$ are constants independent of $l$, since we shall deduce
\begin{multline*}
\biggl\|\Bigl(\sum_{n=1}^{+\infty}|M_{m_l}f_n(\cdot)|^2\Bigr)^\frac{1}{2}\biggr\|^2_{L^2(\mathbb R^d)}\leqslant \\
2\Biggl(\frac{C_1(d)\sqrt{\ln 4}}{2^{{\frac{l(d-1)}{2}}}}\biggl\|\Bigl(\sum_{n=1}^{+\infty}|f_n(\cdot)|^2\Bigr)^\frac{1}{2}\biggr\|_{L^2(\mathbb R^d)}\Biggr)\Biggl(\frac{C_2(d)\sqrt{\ln 4}}{2^{{\frac{l(d-3)}{2}}}}\biggl\|\Bigl(\sum_{n=1}^{+\infty}|f_n(\cdot)|^2\Bigr)^\frac{1}{2}\biggr\|_{L^2(\mathbb R^d)}\Biggr).
\end{multline*}
Thus, we now turn to the proof of \eqref{bess2} in order to complete the proof of the proposition. The following well-known estimate for the Bessel function (see for instance page 238 in \cite{Askey})
\[\sup_{x\geqslant0}\,x^{1/2}|J_\alpha(x)|<+\infty\]
together with the following equality
\[
\frac{d}{dt}J_\alpha(t)=\frac{1}{2}\Bigl(J_{\alpha-1}(t)-J_{\alpha+1}(t)\Bigr),
\]
allow us to write for all $x \in \mathbb R^d\setminus\{0\}$
\[|{m}_l(x)|\leqslant\frac{C_1(d)}{|x|^{\frac{d-1}{2}}}, \quad |\tilde{m}_l(x)|\leqslant\frac{{C_2(d)}}{|x|^{\frac{d-3}{2}}}.\]
We claim that \eqref{bess2} is proved since both ${m}_l$ and $\tilde{m}_l$ are localized near $|x|\simeq 2^l$.
\end{proof}

We now turn to a weaker $L^p(\mathbb R^d;\ell^q)$-result for $M_{m_l}$.

\begin{pr}\label{propl1} Let $l\geqslant 1$ and let $1<p,q<+\infty$. Let $(f_n)_{n\geqslant1}$ be a sequence of measurable functions defined on $\mathbb R^d$. If $\bigl(\sum_{n=1}^{+\infty}|f_n(\cdot)|^q\bigr)^\frac{1}{q} \in L^p(\mathbb R^d)$, then we have\[
 \biggl\|\Bigl(\sum_{n=1}^{+\infty}|M_{m_l} f_n(\cdot)|^q\Bigr)^\frac{1}{q}\biggr\|_{L^p(\mathbb R^d)}\leqslant C(d,p,q)\,2^l\biggl\|\Bigl(\sum_{n=1}^{+\infty}|f_n(\cdot)|^q\Bigr)^\frac{1}{q}\biggr\|_{L^p(\mathbb R^d)},
\]
where $C(d,p,q)$ is a constant independent of $l$ and $(f_n)_{n\geqslant1}$.
\end{pr}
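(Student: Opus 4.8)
The plan is to reduce, exactly as in the proof of Proposition \ref{map}, the operator $M_{m_l}$ to the Hardy-Littlewood maximal operator $\mathcal M$, while keeping careful track of the dependence on $l$. Since $\varphi_l$ is smooth and compactly supported and $m=\widehat{d\sigma}$ is smooth, the function $m_l=\varphi_l m$ belongs to $C_c^\infty(\mathbb R^d)$, so $m_l^\vee$ is a radial Schwartz function and in particular admits a least radially decreasing majorant $\Omega_l$. By Corollary 2.1.12 in \cite{grafakos} (already invoked in Proposition \ref{map}) one obtains the pointwise bound
\[ M_{m_l}f(x)=\sup_{r>0}\bigl|\bigl(f*(m_l^\vee)_r\bigr)(x)\bigr|\leqslant \|\Omega_l\|_{L^1(\mathbb R^d)}\,\mathcal Mf(x),\qquad x\in\mathbb R^d.\]
Applying this to each $f_n$ and invoking the standard vector-valued Fefferman-Stein inequalities for $\mathcal M$ (which hold with a constant $C(d,p,q)$), the proposition will follow once the key estimate $\|\Omega_l\|_{L^1(\mathbb R^d)}\leqslant C(d)\,2^l$ is established.

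To prove this estimate I would first rewrite $m_l^\vee$ as a smoothing of the surface measure. Since $m=\widehat{d\sigma}$, Fourier inversion gives $m^\vee=d\sigma$, whence $m_l^\vee=\varphi_l^\vee * d\sigma$, that is,
\[ m_l^\vee(x)=\int_{S^{d-1}}\varphi_l^\vee(x-y)\,d\sigma(y),\qquad \varphi_l^\vee=2^{ld}\Psi(2^l\cdot)-2^{(l-1)d}\Psi(2^{l-1}\cdot),\]
where $\Psi=\varphi_0^\vee$ is a fixed radial Schwartz function. Thus $m_l^\vee$ is the surface measure convolved with an approximate identity at scale $2^{-l}$, hence concentrated in the shell $\bigl||x|-1\bigr|\lesssim 2^{-l}$. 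The integrand $\varphi_l^\vee(x-y)$ depends on $y\in S^{d-1}$ only through $\langle x/|x|,y\rangle$, so the Funk-Hecke formula reduces the convolution to a one-dimensional integral,
\[ m_l^\vee(x)=c_d\int_{-1}^1 \varphi_l^\vee\Bigl(\sqrt{|x|^2-2|x|t+1}\Bigr)(1-t^2)^{\frac{d-3}{2}}\,dt,\]
which I would estimate so as to obtain a radial pointwise bound of the form $|m_l^\vee(x)|\leqslant C(d)\,2^l\bigl(1+2^l\bigl||x|-1\bigr|\bigr)^{-N}$ for $N$ as large as desired; the factor $2^l$ appears because the cap of radius $2^{-l}$ around a point of $S^{d-1}$ carries $\sigma$-mass $\sim 2^{-l(d-1)}$, which balances the height $2^{ld}$ of $\varphi_l^\vee$.

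Granting such a bound, the least radially decreasing majorant satisfies $\Omega_l(x)\leqslant C(d)\,2^l$ for $|x|\leqslant 1$ (the region where $m_l^\vee$ is large lies at radius $\simeq 1$) and decays rapidly for $|x|>1$, so integration in polar coordinates yields $\|\Omega_l\|_{L^1(\mathbb R^d)}\leqslant C(d)\,2^l$, completing the argument. The main obstacle is precisely this last pointwise estimate on $m_l^\vee$: one must extract the sharp $2^l$ growth together with enough decay in $\bigl||x|-1\bigr|$, uniformly in $l$, and the Funk-Hecke reduction combined with the rapid decay of $\Psi$ is the tool that makes this tractable. Everything else — the domination by $\mathcal M$ and the appeal to the scalar-weight Fefferman-Stein inequalities — is routine.
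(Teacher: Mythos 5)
Your proposal follows essentially the same route as the paper: dominate $M_{m_l}$ pointwise by $\|\Omega_l\|_{L^1}\,\mathcal M$ through a radially decreasing majorant of $m_l^\vee$, reduce the pointwise estimate of $m_l^\vee$ to a one-dimensional integral via the Funk--Hecke formula, and conclude with the standard Fefferman--Stein inequalities; your target bound $|m_l^\vee(x)|\leqslant C(d)\,2^l\bigl(1+2^l\bigl||x|-1\bigr|\bigr)^{-N}$ is correct (indeed slightly sharper in its localization than the paper's bound \eqref{schw}, namely $C(d)\,2^l(1+|x|)^{-(d+1)}$, and both yield $\|\Omega_l\|_{L^1}\leqslant C(d)2^l$), and your cap-mass heuristic for the factor $2^l$ is the right one. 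Be aware, though, that the step you defer --- extracting this uniform-in-$l$ bound from the Funk--Hecke integral --- is where the paper spends almost all of its effort (a dyadic decomposition of $[-1,1]$ into the sets $I_j(x)$ where $\sqrt{|x|^2+1-2|x|t}\simeq 2^{j-l}$, followed by two separate summations $\Sigma_1,\Sigma_2$), so in a complete write-up that computation is the substance of the proof rather than a routine verification.
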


\begin{proof} If we prove that for all $x \in \mathbb R^d$ and $l\geqslant1,$
\begin{equation} \label{schw}
\bigl|m_l^\vee(x)\bigr|\leqslant C(d)\,\frac{2^l}{\bigl(1+|x|\bigr)^{d+1}},
\end{equation}
then we claim, thanks to Corollary 2.1.12. page 84 in \cite{grafakos}, that
\[
\sup_{r>0}\bigl|\bigl(f_n*(m_l^\vee)_r\bigr)(x)\bigr|\leqslant \tilde{C}(d)2^l\mathcal Mf_n(x),
\]
and the standard Fefferman-Stein inequalities for the Hardy-Littlewood maximal operator $\mathcal M$ allows us to conclude. Therefore, we are left with the task of establishing \eqref{schw} to which we now turn. Let $x \in \mathbb R^d$. We can write by use of the Funk-Hecke formula
\[m_l^\vee(x)=C(d)\int_{-1}^1\overline{\varphi_l^\vee}\bigl(\sqrt{|x|^2+1-2|x|t}\bigr)\sqrt{1-t^2}^{d-3}dt,
\]
where we have used the notation, for a radial function $f$ and for all $\xi \in \mathbb R^d$, $f(\xi)=\overline{f}(|\xi|)$.
Since we have $\varphi_l^\vee=\Psi_{2^{-l}}$, with $\Psi \in \mathcal S(\mathbb R^d)$ by the very definition of $\varphi_l$, then
\[
\bigl|m_l^\vee(x)\bigr| \leqslant C(d)\int_{-1}^1\frac{2^{dl}}{\bigl(1+2^l\sqrt{|x|^2+1-2|x|t}\bigr)^{d+2}}\sqrt{1-t^2}^{d-3}dt.
\]
We set \begin{align*}
I_{-1}(x)&=[-1,1]\cap\Bigl\{t \in \mathbb R:  \sqrt{|x|^2+1-2|x|t}\leqslant2^{-l}\Bigr\}
\\
I_j(x)&=[-1,1]\cap\Bigl\{t \in \mathbb R:2^{j-l}<\sqrt{|x|^2+1-2|x|t}\leqslant2^{j+1-l} \Bigr\}, \quad \forall j\geqslant0,
\end{align*}
in order to write
\begin{equation*}
\bigl|m_l^\vee(x)\bigr|\leqslant C(d)(\Sigma_{1}+\Sigma_{2}),
\end{equation*}
with
\begin{align*}
\Sigma_1&= \sum_{j=-1}^{l}\int_{I_j(x)}\frac{2^{dl}}{\Bigl(1+2^l\sqrt{|x|^2+1-2|x|t}\Bigr)^{d+2}}\sqrt{1-t^2}^{d-3}dt
\\
\Sigma_2&= \sum_{j=l+1}^{+\infty}\int_{I_j(x)}\frac{2^{dl}}{\Bigl(1+2^l\sqrt{|x|^2+1-2|x|t}\Bigr)^{d+2}}\sqrt{1-t^2}^{d-3}dt.
\end{align*}
Therefore, the inequality \eqref{schw} is true if we show that
\[\Sigma_1\leqslant C_1(d)\,\frac{2^l}{\bigl(1+|x|\bigr)^{d+1}}, \quad \Sigma_2\leqslant C_2(d)\,\frac{2^l}{\bigl(1+|x|\bigr)^{d+1}},\]
where both $C_1(d)$ and ${C_2(d)}$ are constants independent of $l$. First, let us remark that for $t \in I_j(x)$, $j\geqslant-1$, we have
\begin{equation} \label{keyo}
|x|\leqslant 2^{j+1-l}+1,
\end{equation}
since for $x \in \mathbb R^d$ and $t \in I_j(x)$ fixed, we have
\begin{align*}
|x|\leqslant\sqrt{(|x|-t)^2}+|t| &\leqslant\sqrt{(|x|-t)^2+(1-t^2)}+|t|\\ &=\sqrt{|x|^2+1-2|x|t}+|t|\leqslant 2^{j+1-l}+1.
\end{align*}
We first prove the desired estimate for $\Sigma_1$. The following trivial observation $
-1\leqslant j\leqslant l \Longrightarrow 2^{j+1-l}+1\leqslant 3$ together with \eqref{keyo} lead us to
\[\Sigma_1\leqslant 2^{dl}\chi_{{}_{B(0,3)}}(x)\sum_{j=-1}^{l}\int_{I_{j}(x)}\frac{\sqrt{1-t^2}^{d-3}}{\Bigl(1+2^l\sqrt{|x|^2+1-2|x|t}\Bigr)^{d+2}}dt,\]
where we denote by $\chi_X$ the characteristic function of the set $X$.  Moreover, for all $t \in I_j(x)$, $0\leqslant j \leqslant l$,
\[
\frac{1}{\Bigl(1+2^l\sqrt{|x|^2+1-2|x|t}\Bigr)^{d+2}}\leqslant \frac{1}{2^{j(d+2)}},
\]
and, since this inequality remains obviously true for $j=-1$, we obtain
\[\Sigma_1\leqslant 2^{dl}\chi_{{}_{B(0,3)}}(x)\sum_{j=-1}^{l}\biggl(\frac{1}{2^{j(d+2)}}\int_{I_{j}(x)}\sqrt{1-t^2}^{d-3}dt\biggr).\]
Now we claim that, for all $-1\leqslant j\leqslant l$,
\[
\int_{I_{j}(x)}\sqrt{1-t^2}^{d-3}dt\leqslant 2^{(j+1-l)(d-1)+1}.
\]
Indeed, we have
\begin{align*}
\int_{I_{j}(x)}\sqrt{1-t^2}^{d-3}dt&\leqslant \int_{[-1,1]\cap\bigl\{t \in \mathbb R:\sqrt{1-t^2}\leqslant2^{j+1-l} \bigr\}}\sqrt{1-t^2}^{d-3}dt \\
&\leqslant 2^{(j+1-l)(d-3)}\int_{[-1,1]\cap\bigl\{t \in \mathbb R:\sqrt{1-|t|}\leqslant2^{j+1-l} \bigr\}}dt,
\end{align*}
and the following obvious observation
\[
t \in [-1,1]\cap\bigl\{t \in \mathbb R:\sqrt{1-|t|}\leqslant2^{j+1-l} \bigr\} \Longrightarrow  1-{2^{2(j+1-l)}} \leqslant |t|\leqslant 1
\]
then leads us to
\[
\int_{I_{j}(x)}\sqrt{1-t^2}^{d-3}dt\leqslant 2^{(j+1-l)(d-3)+1}\int^{1}_{1-{2^{2(j+1-l)}}}dt=2^{(j+1-l)(d-1)+1}.
\]
Consequently, we have
\[
\Sigma_1\leqslant 2\sum_{j=-1}^{l}\frac{2^{dl}2^{(j+1-l)(d-1)}}{2^{j(d+2)}}\chi_{{}_{B(0,3)}}(x)=2^{d+l}\sum_{j=-1}^{l}\frac{\chi_{{}_{B(0,3)}}(x)}{2^{3j}}\leqslant C(d)\,\frac{2^{l}}{\bigl(1+|x|\bigr)^{d+1}},\]
and it remains to prove the same estimate for $\Sigma_2$.

Thanks to \eqref{keyo} and to the very definition of $I_j(x)$, we claim that
\[
\Sigma_2\leqslant 2^{dl}\sum_{j=l+1}^{+\infty}\frac{\chi_{{}_{B(0,2^{j+1-l}+1)}}(x)}{2^{j(d+2)}}\int_{I_j(x)}\sqrt{1-t^2}^{d-3}dt.
\]
The obvious inclusion $B(0,2^{j+1-l}+1)\subset B(0,2^{j+2-l})$, for all $j\geqslant l+1$, together with the fact that
\[
\int_{I_j(x)}\sqrt{1-t^2}^{d-3}dt\leqslant \int_{-1}^1\sqrt{1-t^2}^{d-3}dt\leqslant C(d),
\]
allow us to write
\[
\Sigma_2\leqslant C(d)2^{dl}\sum_{j=l+1}^{+\infty}\frac{\chi_{{}_{B(0,2^{j+2-l})}}(x)}{2^{j(d+2)}}\leqslant \frac{C(d)2^{dl}}{\bigl(1+|x|\bigr)^{d
+1}}\sum_{
j=l+1}^{+\infty}\frac{\bigl(1+2^{j+2-l}\bigr)^{d+1}}{2^{j(d+2)}},
\]
from which we deduce that
\begin{align*}
\Sigma_2&\leqslant \frac{C(d)2^l}{\bigl(1+|x|\bigr)^{d+1}}\sum_{j=l+1}^{+\infty}\frac{2^{(j-l)\bigl((d+1)-(d+2)\bigr)}}{2^{l\bigl((d+2)+1-d\bigr)}}\\ &\leqslant \frac{C(d)2^l}{\bigl(1+|x|\bigr)^{d+1}}\sum_{j=l+1}^{+\infty}\frac{2^{l-j}}{2^{3l}}\\ &\leqslant C(d)\,\frac{2^{l}}{\bigl(1+|x|\bigr)^{d+1}}.
\end{align*}

\end{proof}

\begin{rem}
We point out that the pointwise inequality \eqref{schw}, which implies that
\[
\sup_{r>0}\bigl|\bigl(f_n*(m_l^\vee)_r\bigr)(x)\bigr|\leqslant \tilde{C}(d)2^l\mathcal Mf_n(x),
\]
gives us the following weak-type $L^1(\mathbb R^d;\ell^q)$-result ($1<q<+\infty$), with a constant $2^l$: if  $\bigl(\sum_{n=1}^{+\infty}|f_n(\cdot)|^q\bigr)^\frac{1}{q} \in L^1(\mathbb R^d)$, then for every $\lambda>0$ we have
\[
\biggl|\biggl\{x \in \mathbb R^d: \Bigl(\sum_{n=1}^{+\infty}|M_{m_l}f_n(x)|^q\Bigr)^\frac{1}{q}>\lambda\biggr\} \biggr|\leqslant C(d,q)\,\frac{2^l}{\lambda}\biggl\|\Bigl(\sum_{n=1}^{+\infty}|f_n(\cdot)|^q\Bigr)^\frac{1}{q}\biggr\|_{L^1(\mathbb R^d)},
\]
where $C(d,q)$ is a constant independent of $(f_n)_{n\geqslant1}$ and $\lambda$.
\end{rem}

We are now in a position to prove Theorem \ref{sph}.

\begin{proof}[Proof of Theorem \ref{sph}]
We begin the proof by noting that for all $1<p,q<+\infty$, $l \geqslant 1$ and $(f_n(\cdot))_{n\geqslant1} \in L^p(\mathbb R^d;\ell^q)$, we have
\[
\biggl\|\Bigl(\sum_{n=1}^{+\infty}|M_{m_l} f_n(\cdot)|^q\Bigr)^\frac{1}{q}\biggr\|_{L^p(\mathbb R^d)} = \Bigl\| A_{m_l}\bigl((f_n(\cdot))_{n\geqslant1}\bigr) \Bigr\|_{L^p(\mathbb R^d;\ell^q(L^\infty(\,]0,+ \infty[\,)))}, \]
where we have set
\[
A_{m_l} : \begin{cases} L^p(\mathbb{R}^d;\ell^q) \to L^p(\mathbb R^d;\ell^q(L^\infty(\,]0,+ \infty[\,))) \\ (f_n(\cdot))_{n\geqslant1} \mapsto \bigl(r \mapsto f_n \ast ( m_l ^\vee)_r\bigr)_{n\geqslant 1}.\end{cases}
\]
Proposition \ref{propl2} yields that
\[ \|A_{m_l}\|_{L^2(\mathbb{R}^d;\ell^2) \to L^2(\mathbb R^d;\ell^2(L^\infty(\,]0,+ \infty[\,)))} \leqslant C(d) 2^{-\frac{l(d-2)}{2}}, \]
whereas  Proposition \ref{propl1} yields
\[ \|A_{m_l}\|_{L^{p_0}(\mathbb{R}^d;\ell^{q_0}) \to L^{p_0}(\mathbb R^d;\ell^{q_0}(L^\infty(\,]0,+ \infty[\,)))} \leqslant C(d,p_0,q_0) 2^l\]
for any $1 < p_0, q_0 < + \infty$.
Complex interpolation between these two estimates yields for $\frac1p = \eta \frac12 + (1 - \eta) \frac1{p_0}$ and $\frac1q = \eta \frac12 + (1 - \eta) \frac1{q_0}$ that
\[ \|A_{m_l}\|_{L^p(\mathbb{R}^d;\ell^q) \to L^p(\mathbb R^d;\ell^q(L^\infty(\,]0,+ \infty[\,))) } \leqslant C(d,p,q) 2^{l ( 1 - \frac{d}{2} \eta )}. \]
Note that for given $\frac{d}{d-1} < p,q < d,$ we can choose $\eta \in ]0,1[$ such that
\[ \frac1d < \frac{\eta}{2} < \frac1p, \frac1q < 1 - \frac{\eta}{2} < \frac{d-1}{d} . \]
The second and third inequality yield that the parameters $p_0,q_0$ lie in the permitted range $]1, + \infty[,$ whereas the first and the fourth inequality yield that
$1 - \frac{d}{2} \eta < 0,$ so that
\[\sum_{l =1}^{+ \infty} \|A_{m_l}\|_{L^p(\mathbb{R}^d;\ell^q) \to L^p(\mathbb R^d;\ell^q(L^\infty(\,]0,+ \infty[\,))) } \leqslant C(d,p,q) 2^{l(1- \frac{d}{2} \eta)} < + \infty,\] and consequently, appealing also to \eqref{ptw} and Proposition \ref{map}, we get
\begin{multline*}
 \biggl\|\Bigl(\sum_{n=1}^{+\infty}|\mathcal{M}_Sf_n(\cdot)|^q\Bigr)^\frac{1}{q}\biggr\|_{L^p(\mathbb R^d)} \leqslant  \biggl( \|A_{m_0}\|_{L^p(\mathbb{R}^d;\ell^q) \to L^p(\mathbb R^d;\ell^q(L^\infty(\,]0,+ \infty[\,))) } + \\ \sum_{l =1}^{+ \infty} \|A_{m_l}\|_{L^p(\mathbb{R}^d;\ell^q) \to L^p(\mathbb R^d;\ell^q(L^\infty(\,]0,+ \infty[\,))) } \biggr)\biggl\|\Bigl(\sum_{n=1}^{+\infty}|f_n(\cdot)|^q\Bigr)^\frac{1}{q}\biggr\|_{L^p(\mathbb R^d)},
\end{multline*}
and finally
\[
 \biggl\|\Bigl(\sum_{n=1}^{+\infty}|\mathcal{M}_Sf_n(\cdot)|^q\Bigr)^\frac{1}{q}\biggr\|_{L^p(\mathbb R^d)}\leqslant  C(d,p,q) \biggl\|\Bigl(\sum_{n=1}^{+\infty}|f_n(\cdot)|^q\Bigr)^\frac{1}{q}\biggr\|_{L^p(\mathbb R^d)}.
\]
\end{proof}

\section{Proof of Theorem \ref{indepdim}}
\label{sec-proof-thm-1}

This section is devoted to the proof of our first dimensionless result, that is Theorem \ref{indepdim}. We shall use several auxiliary operators, and, even if we shall follow the same strategy as in the scalar case (see \cite{DGM,2st}), we give complete detailed proofs for the reader's convenience.

Let us first introduce the following weighted maximal operator, depending on a parameter $k \in \mathbb N$,

\[
\mathcal{M}_{d,k}f(x)=\sup_{r>0}\frac{\int_{|y|\leqslant r}|f(x-y)|\,|y|^kdy}{\int_{|y|\leqslant r}|y|^kdy}, \quad x \in \mathbb R^d.
\]
It is enough to take polar coordinates in the definition of $\mathcal{M}_{d,k}$ in order to obtain the following pointwise inequality
\begin{equation}
\label{equ-weighted-maximal-spherical-maximal}
\mathcal{M}_{d,k}f(x)\leqslant \mathcal{M}_S|f|(x), \quad x \in \mathbb R^d.
\end{equation}
Therefore, if we apply Theorem \ref{sph}, we get that for $d\geqslant 3$, $d/(d-1)<p,q<d$ and every sequence $(f_n)_{n\geqslant1}$  of measurable functions defined on $\mathbb R^d$ such that $\bigl(\sum_{n=1}^{+\infty}|f_n(\cdot)|^q\bigr)^\frac{1}{q} \in L^p(\mathbb R^d)$
\begin{equation} \label{pass}
\biggl\|\Bigl(\sum_{n=1}^{+\infty}|\mathcal{M}_{d,k}f_n(\cdot)|^q\Bigr)^\frac{1}{q}\biggr\|_{L^p(\mathbb R^d)}\leqslant C(d,p,q)\biggl\|\Bigl(\sum_{n=1}^{+\infty}|f_n(\cdot)|^q\Bigr)^\frac{1}{q}\biggr\|_{L^p(\mathbb R^d)},
\end{equation}
where $C(d,p,q)$ is a constant independent of  $k$ and $(f_n)_{n\geqslant1}$. Now, we shall obtain Theorem \ref{indepdim} by lifting inequality \eqref{pass} in lower dimension $d'$ into $\mathbb R^d$ (with $d'\leqslant d$ and $k=d-d'$) by integrating over the Grassmannian of $d'$-planes in $\mathbb R^d$. This method of descent is in the spirit of the Calder\'on-Zygmund method of rotations.

We therefore decompose $\mathbb R^d$ as follows $\mathbb R^d=\mathbb R^{d'}\times\mathbb R^{d-d'}$ and for $x \in \mathbb R^d$, we write $x=(x_{d'},x_{d-d'})$ with $x_{d'} \in \mathbb R^{d'}$ and $x_{d-d'} \in \mathbb R^{d-d'}$. Besides, for all $\theta \in \mathcal O(d) = \{ \theta' \in \mathbb R^{d \times d}:\: |\theta'(x)| = |x| \text{ for all }x \in \mathbb R^d\}$ the orthogonal group, we introduce the following auxiliary operator
\[
\mathcal{M}^\theta_{d'}f(x)=\sup_{r>0}\frac{\int_{|y_{d'}|\leqslant r}\bigl|f\bigl(x-\theta(y_{d'},0)\bigr)\bigr|\,|y_{d'}|^{d-d'}dy_{d'}}{\int_{|y_{d'}|\leqslant r}|y_{d'}|^{d-d'}dy_{d'}}, \quad x \in \mathbb R^d.
\]

We shall need the following lemma, which provides us Fefferman-Stein inequalities for $\mathcal{M}^\theta_{d'}$ with bound independent of $\theta$ and $d$.

\begin{lem} \label{l1}
Let $d'\geqslant 3$ and $d'/(d'-1)<p,q<d'$. Let $(f_n)_{n\geqslant1}$  be a sequence of measurable functions defined on $\mathbb R^d$. If $\bigl(\sum_{n=1}^{+\infty}|f_n(\cdot)|^q\bigr)^\frac{1}{q} \in L^p(\mathbb R^d)$, then we have
\[
\biggl\|\Bigl(\sum_{n=1}^{+\infty}|\mathcal{M}^\theta_{d'}f_n(\cdot)|^q\Bigr)^\frac{1}{q}\biggr\|_{L^p(\mathbb R^d)}\leqslant C(d',p,q)\biggl\|\Bigl(\sum_{n=1}^{+\infty}|f_n(\cdot)|^q\Bigr)^\frac{1}{q}\biggr\|_{L^p(\mathbb R^d)},
\]
where  $C(d',p,q)$ is a constant independent of $d$, $\theta$ and $(f_n)_{n\geqslant1}$.
\end{lem}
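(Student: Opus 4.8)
The plan is to reduce the estimate for $\mathcal{M}^\theta_{d'}$ on $L^p(\mathbb R^d;\ell^q)$ to the estimate \eqref{pass} for the weighted maximal operator $\mathcal{M}_{d',k}$ on the lower-dimensional space $L^p(\mathbb R^{d'};\ell^q)$, with the weight exponent $k = d - d'$. The key observation is that, after applying the orthogonal change of variables $\theta$, the operator $\mathcal{M}^\theta_{d'}$ acts only in the first $d'$ variables while leaving the remaining $d - d'$ variables frozen as mere parameters. First I would use that $\theta$ is an isometry of $\mathbb R^d$ together with the invariance of Lebesgue measure to replace $f$ by $f \circ \theta$; writing $g = f \circ \theta$, the operator becomes
\[
\mathcal{M}^\theta_{d'}f\bigl(\theta(z)\bigr) = \sup_{r>0}\frac{\int_{|y_{d'}|\leqslant r}\bigl|g\bigl(z - (y_{d'},0)\bigr)\bigr|\,|y_{d'}|^{d-d'}dy_{d'}}{\int_{|y_{d'}|\leqslant r}|y_{d'}|^{d-d'}dy_{d'}}, \quad z \in \mathbb R^d,
\]
and, since $\theta$ preserves the $L^p(\mathbb R^d;\ell^q)$ norm, it suffices to prove the inequality for this translated operator acting on $g$.

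The next step is to freeze the last $d - d'$ coordinates. Writing $z = (z_{d'}, z_{d-d'})$, the subtraction $z - (y_{d'},0) = (z_{d'} - y_{d'}, z_{d-d'})$ shows that for each fixed $z_{d-d'} \in \mathbb R^{d-d'}$ the inner expression is exactly $\mathcal{M}_{d',d-d'}$ applied to the function $z_{d'} \mapsto g(z_{d'}, z_{d-d'})$ on $\mathbb R^{d'}$. Thus I would apply \eqref{pass} (valid since $d' \geqslant 3$ and $d'/(d'-1) < p,q < d'$, with constant $C(d',p,q)$ independent of $k = d-d'$) to the slice functions, raise to the $p$-th power, and integrate over $z_{d-d'} \in \mathbb R^{d-d'}$ via Fubini's theorem. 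Concretely, this gives
\[
\int_{\mathbb R^{d-d'}}\biggl\|\Bigl(\sum_{n}|\mathcal{M}_{d',d-d'}g_n(\cdot,z_{d-d'})|^q\Bigr)^\frac{1}{q}\biggr\|^p_{L^p(\mathbb R^{d'})}\,dz_{d-d'} \leqslant C(d',p,q)^p\,\|g\|^p_{L^p(\mathbb R^d;\ell^q)},
\]
and the left-hand side is precisely the $L^p(\mathbb R^d;\ell^q)$ norm (to the $p$) of the frozen operator applied to $g$.

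The main point to verify carefully is the independence of the constant from both $d$ and $\theta$: the bound coming from \eqref{pass} depends only on $d'$, $p$, $q$ and is uniform in the weight exponent $k = d - d'$, so no dependence on the ambient dimension $d$ creeps in, and the initial reduction by the isometry $\theta$ is norm-preserving, so no dependence on $\theta$ appears either. I do not expect a serious obstacle here; the only mildly delicate step is the bookkeeping in the change of variables by $\theta$, where one must check that the measure $|y_{d'}|^{d-d'}dy_{d'}$ in the definition transforms correctly and that the supremum over $r>0$ commutes with freezing the last coordinates, which it does since $r$ ranges over the same set independently of $z_{d-d'}$. Collecting the constants then yields the claimed inequality with $C(d',p,q)$ independent of $d$, $\theta$ and $(f_n)_{n\geqslant 1}$.
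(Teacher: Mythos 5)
Your proposal is correct and follows essentially the same route as the paper: conjugate by the isometry $\theta$ to reduce to the case $\theta = \mathrm{Id}$, freeze the last $d-d'$ coordinates so that the operator acts as $\mathcal{M}_{d',d-d'}$ on slices, apply the estimate \eqref{pass} (uniform in the weight exponent $k=d-d'$) to each slice, and integrate via Fubini. No gaps; the constant tracking and the independence of $d$ and $\theta$ are handled exactly as in the paper's argument.
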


\begin{proof}
 Since we have
\[
\biggl\|\Bigl(\sum_{n=1}^{+\infty}|\mathcal{M}^\theta_{d'}f_n(\cdot)|^q\Bigr)^\frac{1}{q}\biggr\|^p_{L^p(\mathbb R^d)}=\biggl\|\Bigl(\sum_{n=1}^{+\infty}|\mathcal M^{\mathrm{Id}}_{d'}(f_n\circ\,\theta)\circ\,\theta^{-1}(\cdot)|^q\Bigr)^\frac{1}{q}\biggr\|^p_{L^p(\mathbb R^d)},
\]
then we get by invariance
\[
\biggl\|\Bigl(\sum_{n=1}^{+\infty}|\mathcal{M}^\theta_{d'}f_n(\cdot)|^q\Bigr)^\frac{1}{q}\biggr\|^p_{L^p(\mathbb R^d)}=\biggl\|\Bigl(\sum_{n=1}^{+\infty}|\mathcal M^{\mathrm{Id}}_{d'}(f_n\circ\,\theta)(\cdot)|^q\Bigr)^\frac{1}{q}\biggr\|^p_{L^p(\mathbb R^d)},
\]
which is equivalent to
\begin{multline*}
\biggl\|\Bigl(\sum_{n=1}^{+\infty}|\mathcal{M}^\theta_{d'}f_n(\cdot)|^q\Bigr)^\frac{1}{q}\biggr\|^p_{L^p(\mathbb R^d)}\\
=
\int_{\mathbb R^{d-d'}}\biggl[\int_{\mathbb R^{d'}}\biggl(\sum_{n=1}^{+\infty}\bigl|\mathcal M_{d',d-d'}(f_n\circ\,\theta\,\circ g_{x_{d-d'}})(x_d')\bigr|^q\biggr)^\frac{p}{q}dx_{d'}\biggr]dx_{d-d'}
\end{multline*}
where we have set
\[
 g_{x_{d-d'}}:\mathbb R^{d'}\to\mathbb R^d, \ \  x_{d'}\mapsto (x_{d'},x_{d-d'}).
\]
If we now apply inequality \eqref{pass}, we are led to
\begin{multline*}
\biggl\|\Bigl(\sum_{n=1}^{+\infty}|\mathcal{M}^\theta_{d'}f_n(\cdot)|^q\Bigr)^\frac{1}{q}\biggr\|^p_{L^p(\mathbb R^d)} \leqslant \\ C(d',p,q)\int_{\mathbb R^{d-d'}}\biggl[\int_{\mathbb R^{d'}}\biggl(\sum_{n=1}^{+\infty}\bigl|(f_n\circ\,\theta\,\circ g_{x_{d-d'}})(x_d')\bigr|^q\biggr)^\frac{p}{q}dx_{d'}\biggr]dx_{d-d'}
\end{multline*}
and the expected result easily follows.
\end{proof}

We shall also need the following lemma, which relates the Hardy-Littlewood maximal operator to the operator $\mathcal{M}^\theta_{d'}$.

\begin{lem} \label{l2}
If we denote by $d\mu$ the normalized Haar measure on  $\mathcal O(d)$, then we have the following pointwise inequality
\[
\mathcal Mf(x)\leqslant \int_{\mathcal O(d)} \mathcal{M}^\theta_{d'}f(x)d\mu(\theta), \quad x \in \mathbb R^d.
\]

\end{lem}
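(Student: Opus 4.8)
The plan is to reduce everything, for a fixed radius $r>0$, to the \emph{exact} identity
\[
\frac{1}{|B(x,r)|}\int_{B(x,r)}|f(y)|\,dy \;=\; \int_{\mathcal O(d)}\frac{\int_{|y_{d'}|\leqslant r}\bigl|f\bigl(x-\theta(y_{d'},0)\bigr)\bigr|\,|y_{d'}|^{d-d'}\,dy_{d'}}{\int_{|y_{d'}|\leqslant r}|y_{d'}|^{d-d'}\,dy_{d'}}\,d\mu(\theta).
\]
Once this is in hand, the lemma follows by monotonicity: for every $\theta\in\mathcal O(d)$ the inner quotient on the right is, by definition, at most $\mathcal M^\theta_{d'}f(x)$, being one of the quotients over which the latter takes its supremum. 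Hence the right-hand side is bounded by $\int_{\mathcal O(d)}\mathcal M^\theta_{d'}f(x)\,d\mu(\theta)$ for \emph{every} $r$, and taking the supremum over $r>0$ on the left then produces $\mathcal Mf(x)$.

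To prove the identity I would pass to polar coordinates on both sides and use the rotation-invariance of $d\mu$. On the right, writing $y_{d'}=s\zeta$ with $s=|y_{d'}|$ and $\zeta\in S^{d'-1}$, the weight $|y_{d'}|^{d-d'}$ and the radial Jacobian $s^{d'-1}$ combine to give $s^{d-1}$; if $d\sigma'$ denotes the normalized surface measure on $S^{d'-1}$ and $A'$ its total mass, the numerator equals $A'\int_0^r s^{d-1}\int_{S^{d'-1}}|f(x-s\,\theta(\zeta,0))|\,d\sigma'(\zeta)\,ds$ while the denominator equals $A'\,r^d/d$. The factor $A'$ cancels and, crucially, the denominator no longer depends on $\theta$, so it factors out of the $d\mu$-integral. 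The decisive point is that $(\zeta,0)$ is a unit vector of $\mathbb R^d$ and that the pushforward of the Haar measure under the orbit map $\theta\mapsto\theta(\zeta,0)$ is exactly the normalized surface measure $d\sigma$ on $S^{d-1}$; hence, for each fixed $\zeta$,
\[
\int_{\mathcal O(d)}\bigl|f\bigl(x-s\,\theta(\zeta,0)\bigr)\bigr|\,d\mu(\theta)=\int_{S^{d-1}}|f(x-s\eta)|\,d\sigma(\eta),
\]
an expression independent of $\zeta$. Carrying out the $\theta$-integration first (Fubini) and using $\int_{S^{d'-1}}d\sigma'=1$, the right-hand side collapses to
\[
\frac{d}{r^d}\int_0^r s^{d-1}\int_{S^{d-1}}|f(x-s\eta)|\,d\sigma(\eta)\,ds.
\]

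Finally, passing to polar coordinates $z=s\eta$ in the Hardy--Littlewood average and using $|B(x,r)|=c_d r^d$ with $c_d$ a multiple of the total surface area of $S^{d-1}$, one sees immediately that $\frac{1}{|B(x,r)|}\int_{B(x,r)}|f(y)|\,dy$ equals the very same double integral displayed above. This establishes the identity, and with it the lemma. I expect the only genuinely delicate point to be the careful bookkeeping of the surface-area normalizations, so that all constants cancel and one obtains an equality (not merely an inequality) for each $r$; the conceptual engine is the pushforward identity for the Haar measure, the standard mechanism underlying the Calder\'on--Zygmund method of rotations.
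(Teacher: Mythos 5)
Your proof is correct and follows essentially the same route as the paper: both reduce the lemma to the exact identity for each fixed $r$, pass to polar coordinates so the weight $|y_{d'}|^{d-d'}$ combines with the Jacobian to give $s^{d-1}$, and invoke the fact that the pushforward of the normalized Haar measure on $\mathcal O(d)$ under the orbit map is the unique rotation-invariant probability measure $d\sigma$ on $S^{d-1}$. The only cosmetic difference is that the paper first reduces by density to functions of the form $f_0(|x|)f_1(x')$ before identifying the pushforward measure, whereas you apply Fubini directly to general $f$; both are sound.
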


\begin{proof}
If we prove the following equality
\begin{equation} \label{intorth}
\frac{\int_{|y|\leqslant r}|f(y)|dy}{\int_{|y|\leqslant r}dy}=\frac{\int_{\mathcal O(d)}\int_{|y_{d'}|\leqslant r}\bigl |f\bigl(\theta(y_{d'},0)\bigr)\bigr|\,|y_{d'}|^{d-d'}dy_{d'}d\mu(\theta)}{\int_{|y_{d'}|\leqslant r}|y_{d'}|^{d-d'}dy_{d'}},
\end{equation}
then the expected inequality follows easily. Indeed, the previous equality allows us to write
\begin{align*}
\frac{1}{|B(0,r)|}\int_{B(0,r)}|f(x-y)|dy&=\frac{\int_{\mathcal O(d)}\int_{|y_{d'}|\leqslant r}\bigl |f\bigl((x-\theta(y_{d'},0)\bigr)\bigr|\,|y_{d'}|^{d-d'}dy_{d'}d\mu(\theta)}{\int_{|y_{d'}|\leqslant r}|y_{d'}|^{d-d'}dy_{d'}} \\ & \leqslant \int_{\mathcal O(d)}\mathcal{M}^\theta_{d'}f(x)d\mu(\theta)
\end{align*}
and it is then enough to take the supremum over all $r>0$. Therefore, we are left with the task of establishing \eqref{intorth}. Of course, by density arguments, we can restrict ourselves to finite linear combinations of functions whose expression has the following form $f(x)=f_0(|x|)f_1(x')$, with $x=|x|x'$ and $x' \in S^{d-1}$. If we take polar coordinates in the left-side of \eqref{intorth} we get
\[
\frac{d}{r^d}\int_0^rf_0(t)t^{d-1}dt\int_{S^{d-1}}f_1(y')d\sigma(y'),
\]
and if we take polar coordinates in the right-side of \eqref{intorth} we get
\[
\frac{d}{r^d}\int_0^rf_0(t)t^{d-1}dt\int_{\mathcal O(d)}\int_{S^{d'-1}}f_1\bigl(\theta(y'_{d'},0)\bigr)d\sigma(y'_{d'})d\mu(\theta).
\]
Thus, we only have to prove the following equality
\begin{equation} \label{pass2}
\int_{S^{d-1}}f_1(y')d\sigma(y')=\int_{\mathcal O(d)}\int_{S^{d'-1}}f_1\bigl(\theta(y'_{d'},0)\bigr)d\sigma(y'_{d'})d\mu(\theta).
\end{equation}
Let us note that we can write
\[
\int_{\mathcal O(d)}\int_{S^{d'-1}}f_1\bigl(\theta(y'_{d'},0)\bigr)d\sigma(y'_{d'})d\mu(\theta)=\int_{S^{d-1}}f_1(\eta)d\nu(\eta),
\]
where we have used the notation $d\nu$ to denote the pushforward of $d\sigma\otimes d\mu$ under the map $(y'_{d'},\tau)\mapsto\theta(y'_{d'},0)$.  Since $d\nu$ is invariant under the left-action of $\mathcal O(d)$, we claim that  $d\nu=d\sigma$, and the proof is finished.
\end{proof}

With the two previous lemmas in mind, we are now in a  position to prove Theorem \ref{indepdim}.

\begin{proof}[Proof of Theorem \ref{indepdim}]
Let $1<p,q<+\infty$.

If $d\leqslant3$ or $d\leqslant \max(\frac{p}{p-1},\frac{q}{q-1})$ or $d \leqslant \max(p,q)$, there is nothing to do, that is, we invoke the standard methods to get Fefferman-Stein inequalities.

 We can therefore assume that $d\geqslant3$ and that $\frac{d}{d-1}<p,q<d$. Thus, we write $d=d'+(d-d')$ with
 \[
d'=\bigl\lfloor \max\bigl(2,p,q,p/(p-1),q/(q-1)\bigr) \bigr\rfloor+1.
\]
The expected result then follows easily by using both  Lemma \ref{l1} and Lemma \ref{l2}.
\end{proof}

\section{Proof of Theorem \ref{thm-independent-dimension-UMD-lattice}}
\label{sec-proof-thm-2}

This section is devoted to the proof of the dimension free estimate of the UMD lattice valued Hardy-Littlewood maximal operator.
In order to make the meaning of the latter precise, we recall the necessary background on UMD lattices, i.e. Banach lattices which are UMD spaces.
First, for a general treatment of Banach lattices and their geometric properties, we refer the reader to  Chapter 1 in \cite{LT79}.
Second, a Banach space $Y$ is called UMD space if the Hilbert transform
\[ H : L^p(\mathbb R) \to L^p(\mathbb R),\: Hf(x) = PV - \int_{\mathbb R} \frac{1}{x-y} f(y) dy, \]
extends to a bounded operator on $L^p(\mathbb R;Y),$ for some (equivalently for all) $1 < p < +\infty$, see  Theorem 5.1 in \cite{HvNVW}.
A UMD space is super-reflexive \cite{Al79}, and hence (almost by definition) B-convex.
Let in the following $Y$ be a UMD space which is also a Banach lattice.
By $B$-convexity, $Y$ is order continuous and therefore it can be represented as a lattice consisting of (equivalence classes of) measurable functions on some measure space $(\Omega,\mu),$ see 1.a, 1.b in \cite{LT79}.
If $Y$ is a UMD lattice and $1 < p < +\infty,$ then $L^p(\mathbb R^d;Y)$ is again a UMD lattice.

\begin{rem}
In the proof below, we will use frequently and tacitly the following almost trivial observation:
if $0 \leqslant M(f) \leqslant N(f)$ for some element $f \in L^p(\mathbb R^d;Y)$ and images $M(f),N(f)$ of (typically non-linear) mappings $M,N,$ belonging to $L^p(\mathbb R^d;Y),$ then $\|M(f)\|_{L^p(\mathbb R^d;Y)} \leqslant \|N(f)\|_{L^p(\mathbb R^d;Y)}.$
This follows promptly from lattice axioms.
\end{rem}

Since the UMD lattice $Y$ is order continuous, by Proposition 1.a.8 in \cite{LT79}, it is also order complete.
Now define the UMD lattice valued Hardy-Littlewood maximal operator and spherical maximal operator by
\begin{align*}
\mathcal Mf(x,\cdot) & = \sup_{r > 0} \frac{1}{|B(x,r)|} \int_{B(x,r)} |f(y,\cdot)| dy, & f \in L^p(\mathbb{R}^d) \otimes Y \subseteq L^p(\mathbb{R}^d;Y) \\
\intertext{and}
\mathcal{M}_S f(x,\cdot) & = \sup_{r > 0} \biggl| \int_{S^{d-1}} f(x - ry,\cdot) d\sigma(y) \biggr|, & f \in \mathcal{S}(\mathbb R^d) \otimes Y \subseteq L^p(\mathbb{R}^d;Y) .
\end{align*}
Using the order completeness of $L^p(\mathbb R^d;Y)$ and the scalar valued boundedness of $\mathcal M$ and $\mathcal{M}_S,$ it is not difficult to show that the above suprema exist in $L^p(\mathbb R^d;Y)$ in the lattice sense.
Thus, $\mathcal Mf$ (resp. $\mathcal{M}_Sf$) are well-defined elements of $L^p(\mathbb R^d;Y)$ for the above $f$ and $1 < p < +\infty$ (resp. $\frac{d}{d-1} < p < +\infty$).
In the sequel, we can restrict in statements of boundedness of maximal operators first to $f$ belonging to $L^p(\mathbb R^d) \otimes Y$ or to $\mathcal{S}(\mathbb R^d) \otimes Y$ to have a priori maximal functions belonging to $L^p(\mathbb R^d;Y).$
Then use the remark above in this section and get an inequality for such $f$
\[
\|\mathcal Mf\|_{L^p(\mathbb R^d;Y)} \leqslant C \|f\|_{L^p(\mathbb R^d;Y)},
\]
resp. with $\mathcal M$ on the l.h.s. replaced by $\mathcal{M}_S$, or the several other maximal operators we use in the proof.
Finally for general $f \in L^p(\mathbb R^d;Y)$ and an approximating sequence of $f$, say $(f_n)_{n\geqslant1}$ belonging to $\mathcal{S}(\mathbb R^d) \otimes Y$,
$\mathcal Mf$ can be well defined by $\mathcal Mf = \lim_n \mathcal Mf_n,$ due to
\[
\|\mathcal Mf_n - \mathcal Mf_m\| \leqslant \|\mathcal M(f_n-f_m)\| \leqslant C \|f_n-f_m\|.
\]
We thus end up with the desired estimate $\|\mathcal Mf\| \leqslant C \|f\|$ for all $f \in L^p(\mathbb R^d;Y),$ with the same constant $C$ as before.

\begin{proof}[Proof of Theorem \ref{thm-independent-dimension-UMD-lattice}]
Let $1<p<+\infty$. Since $Y$ is a UMD Banach lattice, say over the measure space $(\Omega,\mu),$ we claim, invoking  Corollary page 216 in \cite{RdF1986}, that there exists another UMD Banach lattice $Z$ defined on $(\Omega,\mu),$ such that
\[
Y = [Z,H]_\eta, \quad \mathrm{with\ }  H = L^2(\Omega,\mu) \mathrm{\ a \ Hilbert \ space},
\]
where $[\cdot,\cdot]_\eta$ is the complex interpolation method, with $\eta \in ]0,1[.$
Having a closer look at the proof of Corollary page 216 in \cite{RdF1986} (alternatively, by replacing $Z$ with $[Z,H]_{\eta_0}$ and using reiteration of complex interpolation), we can assume $\eta$ sufficiently close to $0$ such that $\frac{\eta}{2} < \frac1p < 1 - \frac{\eta}{2}.$

Let $\mathcal{M}_S$ be the spherical maximal operator. Our first aim is to show its boundedness on $L^p(\mathbb R^d;Y)$ for $d$ sufficiently large.
Write
\[\mathcal{M}_Sf(x,y) \leqslant \sum_{l = 0}^{+ \infty} M_{m_l}f(x,y)\] as in \eqref{ptw}, as soon as $d \geq 3.$
Here and in the sequel, $x \in \mathbb R^d$ and $y \in \Omega.$
For $X \in \{Z,H,Y\},$ we denote the linear operator
\[A_{m_l} : \begin{cases} L^p(\mathbb R^d;X) \to L^p(\mathbb R^d;X(L^\infty(\,]0,+ \infty[\,)))\\ f \mapsto f \ast (m_l^\vee)_r(x,y).\end{cases}\]
We clearly have the following equalities
\begin{align*}
M_{m_l}f(x,y) &= \|A_{m_l} f(x,y,\cdot)\|_{L^\infty(\,]0,+ \infty[\,)} \\
\|M_{m_l}f\|_{L^p(\mathbb R^d;X)}& = \|A_{m_l}f\|_{L^p(\mathbb R^d;X(L^\infty(\,]0,+ \infty[\,)))}.
\end{align*}
Moreover, we have
\[
M_{m_0}f(x,y)  \leqslant C(d) \mathcal Mf(x,y),
\]
and it is a well-known fact that the centered Hardy-Littlewood maximal operator satisfies
\[
\mathcal Mf(x)  \leqslant C(d) \mathcal{M}_\Delta |f|(x),
\]
where we have set
\[
\mathcal{M}_\Delta f(x) = \sup_{t > 0} |e^{t\Delta} f(x)|
\]
for the maximal operator associated with the heat (diffusion) semigroup on $\mathbb R^d.$
Then according to Theorem 2 in \cite{Xu2015}, $\mathcal M$ is bounded on $L^p(\mathbb R^d;X)$ for any UMD Banach lattice $X$.
We shall apply this fact with $X = Y$ and $X = Z.$

Namely, first, we have by the above
\[
\|M_{m_0}f\|_{L^p(\mathbb R^d;Y)} \leqslant C(d,p,Y) \|f\|_{L^p(\mathbb R^d;Y)}.\]
Since $H$ is a Hilbert space, we have by Proposition \ref{propl2} the inequality
\begin{equation}
\label{equ-average-Hilbert}
 \| A_{m_l} f \|_{L^2(\mathbb R^d;H(L^\infty(\,]0,+ \infty[\,)))} \leqslant \frac{C(d)}{2^{\frac{l(d-2)}{2}}} \|f\|_{L^2(\mathbb R^d;H)}.
\end{equation}
Furthermore, according to \eqref{schw}, we have
\[
\sup_{r > 0} |A_{m_l}f(x,y,r)| \leqslant C(d) 2^l \mathcal Mf(x,y),
\]
and thus,
\begin{equation}
\label{equ-average-UMD-lattice}
\| A_{m_l} f \|_{L^{p_0}(\mathbb R^d;Z(L^\infty(\,]0,+ \infty[\,)))} \leqslant C(d,p_0,Z) 2^l \| f \|_{L^{p_0}(\mathbb R^d;Z)},
\end{equation}
for $p_0 \in ]1,+ \infty[$ to be chosen later.
We next want to apply complex interpolation of \eqref{equ-average-Hilbert} and \eqref{equ-average-UMD-lattice}.
To this end, note that
\[
[Z(\mathrm L^\infty),H(L^\infty)]_\eta \hookrightarrow [Z,H]_\eta(L^\infty) = Y(L^\infty)
\]
contractively, according for instance to (1.1) in \cite{Fan} and Calder\'on's interpolation identification between the Calder\'on-Lozanovskii space $Z^{1-\eta}H^\eta$ and $[Z,H]_\eta = Y,$ see for instance page 215 in \cite{RdF1986}.
Then we get
\[ \|A_{m_l}f\|_{L^{p}(\mathbb R^d;Y(L^\infty(\,]0,+ \infty[\,)))} \leqslant C(d,p,\eta,Z) 2^{l k(\eta)}\|f\|_{L^{p}(\mathbb R^d;Y)},\]
with
\[
\frac{1}{p} = \frac{\eta}{2} + \frac{1 - \eta}{p_0} = \frac{1}{p_0} + \eta \Bigl( \frac12 - \frac1{p_0} \Bigr)
\]
and
\[
k(\eta) = - \eta \frac{d-2}{2} + (1 - \eta) = 1 - \eta \frac{d}{2}.
\]
Here, since $\eta$ was sufficiently close to $0,$ there exists an appropriate choice of $p_0 \in ]1,+ \infty[.$
If now $d > d_0 := \max(3,\lfloor 2/\eta \rfloor + 1),$ then $k(\eta) < 0,$ so that then
\begin{multline*}
\|\mathcal{M}_S f\|_{L^{p}(\mathbb R^d;Y)} \leqslant \\ \|A_{m_0}f\|_{L^{p}(\mathbb R^d;Y(L^\infty(\,]0,+ \infty[\,)))} + \sum_{l = 1}^{+ \infty} C(d,p,\eta,Z) 2^{l k(\eta)} \|f\|_{L^{p}(\mathbb R^d;Y)},
\end{multline*}
which yields to
\[
 \|\mathcal{M}_S f\|_{L^{p}(\mathbb R^d;Y)}\leqslant C'(d,p,Y) \|f\|_{L^{p}(\mathbb R^d;Y)}.
\]
We deduce that for $d' \geqslant d_0$
\[ \| \mathcal{M}^\theta_{d'} f \|_{L^p(\mathbb R^d;Y)} \leqslant C(d',p,Y) \|f\|_{L^p(\mathbb R^d;Y)}, \]
which can be proved as Lemma \ref{l1} using \eqref{equ-weighted-maximal-spherical-maximal}, and the above vector-valued boundedness of the spherical maximal operator, i.e. on $L^p(\mathbb R^{d'};Y).$
Then by Lemma \ref{l2}, as in the case $Y = \ell^q,$ we deduce with $d' = d_0$ and $d \geqslant d'$ that the centered Hardy-Littlewood maximal operator is bounded on $L^p(\mathbb R^d;Y),$
with bound $C(d',p,Y)$ independent of $d \geqslant d'.$

For dimensions $d < d',$ we invoke the above explained estimate of $\mathcal Mf(x) \leqslant C(d) \mathcal{M}_\Delta |f|(x).$
\end{proof}

\section{Application to the Grushin maximal operator}
\label{sec-Grushin}

In this section, we show that Theorems \ref{indepdim} and \ref{thm-independent-dimension-UMD-lattice} can be transferred to the context of Grushin operators. Initially studied by Grushin (see for example \cite{Grushin}),  these operators have received considerable attention in recent times, especially with some results on their harmonic analysis, see for instance \cite{DzJ,Li,2m}. Moreover, dimensionless type results have been in particular investigated, mainly for Riesz transforms associated with them, see \cite{sath}.

Let us recall the setting. The Grushin operator is given by
\[ \Delta_G = \sum_{i = 1}^d \frac{\partial^2}{\partial x_i^2} + |x|^2 \frac{\partial^2}{\partial u^2} = \sum_{i = 1}^d(X_i^2 + U_i^2) \]
on the space $\mathbb R^{d+1} = \mathbb R^d_x \times \mathbb R_u$, with
\[ |x|^2 = \sum_{i = 1}^d x_i^2,\quad X_i = \frac{\partial}{\partial x_i}, \quad U_i = x_i \frac{\partial}{\partial u}, \]
where the smooth vector fields $\{X_i,U_i\}_{1 \leq i \leq d}$ satisfy the H\"ormander condition.
We point out that the operator $\Delta_G$ is related to the Heisenberg group $\mathbb H_d$, since it is actually the image of a sub-Laplacian associated with $\mathbb H_d$ under a representation acting on functions on $\mathbb R^{d+1}$.
Let $d_{CC}$ denote the Carnot-Carath\'eodory distance associated with $\{ X_1, \ldots, X_d,U_1,\ldots,U_d\}$ (see for example \cite{VSCC}).
Then $(\mathbb R^{d+1},d_{CC},dm)$ is a space of homogeneous type, where $dm$ stands for the Lebesgue measure, which is not, however, translation invariant.
We define a further pseudo-metric on $\mathbb R^{d+1}.$
Namely, for $g = (x,u)$ and $g' = (x',u')$ belonging to $\mathbb R^d_x \times \mathbb R_u,$ we let
\[ d_K(g,g') = \sqrt{\sqrt{(|x|^2+|x'|^2)^2+(2|u-u'|)^2} - 2 \langle x,x'\rangle},\]
where $\langle\cdot,\cdot\rangle$ denotes the standard Euclidean scalar product. Then $d_K$ is a pseudo-distance on $\mathbb R^{d+1}$ (which is, in fact, equivalent to $d_{CC}$ \cite{Li}) related to the fundamental solution of $\Delta_G$ (that is to say Green's function).
We denote balls with respect to these two (pseudo)-distances by
 \[
 B_{CC}(g,r) = \{ g' \in \mathbb R^{d+1} :\: d_{CC}(g,g') \leqslant r \}
 \]
 and
 \[
 B_K(g,r) = \{ g' \in \mathbb R^{d+1} :\: d_K(g,g') \leqslant r \}.
 \]
This gives rise to the following Hardy-Littlewood maximal operators $\mathcal{M}_{CC}$ and $\mathcal{M}_K$, respectively and naturally given for $f \in L^1_{\mathrm{loc}}(\mathbb R^{d+1})$ by
\begin{align*}
\mathcal{M}_{CC}f(g) & = \sup_{r > 0} \frac{1}{|B_{CC}(g,r)|} \int_{B_{CC}(g,r)} |f(g')| dg', & \quad g \in \mathbb R^{d+1}, \\
\mathcal{M}_Kf(g) & = \sup_{r > 0} \frac{1}{|B_{K}(g,r)|}\int_{B_{K}(g,r)} |f(g')| dg', & \quad g \in \mathbb R^{d+1}.
\end{align*}
If $Y = Y(\Omega)$ is a UMD Banach lattice, these operators extend, for  $g \in \mathbb R^{d+1},\: \omega \in \Omega,\: f \in L^p(\mathbb R^{d+1}) \otimes Y$, by the formula
\[ (\mathcal{M}_{CC} f)(g,\omega) = \mathcal{M}_{CC} (f(\cdot,\omega))(g),\]
and similarly for $\mathcal{M}_K.$
This a priori definition of $\mathcal{M}_{CC}f$ and $\mathcal{M}_Kf$ yields a well-defined element in $L^p(\mathbb R^{d+1};Y),$ similarly to the remarks at the beginning of Section \ref{sec-proof-thm-2} concerning the Hardy-Littlewood maximal operator.
We can also restrict ourselves to $f \in L^p(\mathbb R^d) \otimes Y$ in the proof of Theorem \ref{thm-Grushin}, to which we proceed now.

\begin{proof}[Proof of Theorem \ref{thm-Grushin}]
For $\mathcal{M}_K,$ all we have to know is that (see (7.2) in \cite{Li})
\[
\mathcal{M}_Kf(x,u) \leqslant C \mathcal M_{\mathbb R^d}\bigl( \mathcal M_{\mathbb R} f(\cdot,u) \bigr)(x),
\]
where $\mathcal M_{\mathbb R^d}$ and $\mathcal M_{\mathbb R}$ stand respectively for the standard Hardy-Littlewood maximal operator on $\mathbb R^d$ and $\mathbb R.$
We can therefore apply the dimension free Theorems \ref{indepdim} and \ref{thm-independent-dimension-UMD-lattice}.

Then for $\mathcal{M}_{CC},$ it suffices to have in mind that for all $g, g' \in \mathbb R^{d+1}$ (see Propositions 5.1 and 5.2 in \cite{Li})
\[ d_K(g,g') \leqslant d_{CC}(g,g') ,\quad |B_{CC}(g,1)| \geqslant C |B_K(g,1)|\]
with a constant $C > 0$ independent of the dimension $d \in \mathbb N.$
Indeed, using for both $B = B_{CC}$ and $B = B_K$
\[
|B((x,u),r)| = r^{d+2} |B(\delta_{r^{-1}}(x,u),1)|
\]
where $\delta_{r^{-1}}(x,u) = (r^{-1}x,r^{-2}u)$  \cite{Li}, we deduce
\begin{align*}
\mathcal{M}_{CC}f(g) & = \sup_{r > 0} \frac{1}{|B_{CC}(g,r)|} \int_{B_{CC}(g,r)} |f(g')| dg'\\
& = \sup_{r > 0} \frac{1}{r^{d+2} |B_{CC}(\delta_{r^{-1}}g,1)|} \int_{B_{CC}(g,r)} |f(g')| dg' \\
& \leqslant \sup_{r > 0} \frac{1}{Cr^{d+2} |B_K(\delta_{r^{-1}} g,1)|} \int_{B_K(g,r)} |f(g')| dg',
\end{align*}
that is to say,
\[
\mathcal{M}_{CC}f(g) \leqslant \frac{1}{C}\mathcal{M}_Kf(g).
\]
Therefore, the statement for $\mathcal{M}_{CC}$ follows from that for $\mathcal{M}_K.$
\end{proof}

Another interesting case would be the centered Hardy-Littlewood maximal operator on the Heisenberg group, as studied e.g. in \cite{LiHeisenberg,Zien}.
After personal communication with Hong-Quan Li, we do not know whether this maximal operator admits dimension free $\ell^q$ or UMD lattice valued estimates.

\vspace{0.3cm}

 \textbf{Acknowledgments}. The authors wish to thank the referee for her/his careful reading of the manuscript and helpful comments and remarks which improved the quality of the paper.


\footnotesize{
\noindent Luc Deleaval \\
\noindent
Laboratoire d'Analyse et de  Math\'ematiques Appliqu\'ees (UMR 8050)\\
Universit\'e Paris-Est Marne la Vall\'ee \\
5, Boulevard Descartes\\
Champs sur Marne\\
77454 Marne la Vall\'ee Cedex 2\\
luc.deleaval@u-pem.fr\hskip.3cm
}

\vspace{0.2cm}

\footnotesize{
\noindent Christoph Kriegler \\
\noindent
Laboratoire de Math\'ematiques (UMR 6620)\\
Universit\'e Blaise-Pascal (Clermont-Ferrand 2)\\
Campus Universitaire des C\'ezeaux\\
3, place Vasarely\\
TSA 60026\\
CS 60026\\
63 178 Aubi\`ere Cedex, France\\
christoph.kriegler@math.univ-bpclermont.fr\hskip.3cm
}

\end{document}